\documentclass[11pt]{amsart} 
\usepackage[utf8]{inputenc}
\usepackage{csquotes}
\usepackage[english]{babel}
\usepackage[hmargin=2.3cm,vmargin=3.8cm]{geometry} 
\usepackage[utf8]{inputenc}

\usepackage{tikz}
\usepackage{tikz-cd}
\usepackage{amsmath,dsfont,amsthm,amsfonts,enumerate,xcolor,stmaryrd,mathrsfs,amssymb,graphicx,mathtools}
\usepackage{thmtools}
\usepackage[colorlinks,citecolor=blue,linkcolor=blue,urlcolor=blue,filecolor=blue,breaklinks]{hyperref}

\usepackage{epsfig}
\usepackage{epstopdf}
\usepackage{subcaption}
\usepackage{fancyhdr}

\pagestyle{fancy}

\fancyhead[L]{}
\fancyhead[R]{}
\fancyhead[CO]{}
\fancyhead[CE]{}
\fancyfoot{} % clear all footer fields
\fancyfoot[LE,RO]{\thepage}

\definecolor{dOrange}{rgb}{0.9,0.4,0}

\fancypagestyle{firstpage}{\rfoot{\thepage}\chead{}\lhead{}}

\usetikzlibrary{matrix}
\usepackage{thm-restate}
 \usepackage[backend=biber, style=alphabetic, sorting=nyt, doi=false, isbn=false, eprint=false, related=false]{biblatex}
\addbibresource{referencelist.bib}

\definecolor{dOrange}{rgb}{0.9,0.4,0}

\DeclareFieldFormat{url}{Available at\addcolon\space\url{#1}}

\AtEveryBibitem{%
  \ifentrytype{misc}{%
  }{%
    \clearfield{url}%
    \clearfield{urldate}%
  }%
}

\renewbibmacro{in:}{%
  \ifboolexpr{%
     test {\ifentrytype{article}}%
     or
     test {\ifentrytype{inproceedings}}%
  }{}{\printtext{\bibstring{in}\intitlepunct}}%
}

\makeatletter
\newcommand{\pushright}[1]{\ifmeasuring@#1\else\omit\hfill$\displaystyle#1$\fi\ignorespaces}
\newcommand{\pushleft}[1]{\ifmeasuring@#1\else\omit$\displaystyle#1$\hfill\fi\ignorespaces}
\makeatother

\DeclareMathOperator{\Aut}{Aut}
\DeclareMathOperator{\hAut}{hAut}
\DeclareMathOperator{\Out}{Out}
\DeclareMathOperator{\Inn}{Inn}
\DeclareMathOperator{\Hom}{Hom}
\DeclareMathOperator{\Diff}{Diff}

\DeclareMathOperator{\cGL}{GL}
\DeclareMathOperator{\cG}{\mathcal{G}}

\DeclareMathOperator{\A}{\mathcal{A}}
\DeclareMathOperator{\Ch}{Ch}
\DeclareMathOperator{\coCh}{coCh}

\DeclareMathOperator{\id}{id}
\DeclareMathOperator{\Ab}{Ab}
\DeclareMathOperator{\sgn}{sgn}
\DeclareMathOperator{\Ind}{Ind}
\DeclareMathOperator{\Mod}{Mod}
\DeclareMathOperator{\Fun}{Fun}

\newcommand*{\fullref}[1]{\hyperref[{#1}]{\ref*{#1}. \nameref*{#1}}} % One single link

\newcommand{\bQ}{\mathbb{Q}}
\newcommand{\bZ}{\mathbb{Z}}
\newcommand{\cC}{\mathcal{C}}
\newcommand{\cE}{\mathcal{E}}
\newcommand{\stable}{\mathrm{st}}
\newcommand{\Tw}{\mathrm{Tw}}
\newcommand{\Gr}{\mathrm{Gr}}

\newcommand{\coker}{\mathrm{coker}}
\newcommand{\cP}{\mathcal{P}}
\newcommand{\Inj}{\mathrm{Inj}}
\newcommand{\Image}{\mathrm{im}}

\theoremstyle{plain}
  \newtheorem{thm}{Theorem}[section]
  \newtheorem{theorem}[thm]{Theorem}
  \newtheorem{corollary}[thm]{Corollary}
  \newtheorem{proposition}[thm]{Proposition}
  \newtheorem{lemma}[thm]{Lemma}

 \newtheorem{thmxx}{Theorem}

\theoremstyle{definition}

  \newtheorem{definition}[thm]{Definition}
  
    \newtheorem{remark}[thm]{Remark}
    \newtheorem{notation}[thm]{Notation}
    \newtheorem{assumption}[thm]{Assumption}

  \graphicspath{{Images/}{\main/Images/}}

\newcommand{\Addresses}{{% additional braces for segregating \footnotesize
  \bigskip
  \footnotesize
  
\textsc{Erik Lindell, Institut for Matematiske Fag, Københavns Universitet, Universitetsparken 5, 2100
København Ø, Denmark.}\par\nopagebreak
  \textit{E-mail address:} \texttt{erikjlindell@gmail.com}
}}

\parindent=20pt 
\parskip=0pt
\setlength{\intextsep}{10pt} 
\title{\textbf{Stable cohomology of} $\mathrm{{Aut}(F_n)}$\textbf{ with bivariant twisted coefficients}}
\author{Erik Lindell }
\date{}

\begin{document}

\maketitle

\begin{abstract}
    \noindent We compute the cohomology groups of the automorphism group of the free group $F_n$, with coefficients in arbitrary tensor products of the standard rational representation $H_1(F_n,\bQ)$ and its dual, in a range where $n$ is sufficiently large compared to the cohomological degree and the number of tensor factors.
\end{abstract}

\thispagestyle{firstpage}

\phantomsection\label{sec:introduction} 

\section{Introduction}\label{sec:introduction-name}

\noindent Let $F_n$ denote the free group on $n\ge 1$ generators and let $\Aut(F_n)$ denote its automorphism group. There is a group homomorphism $\Aut(F_n)\to\Aut(F_{n+1})$ given by extending an automorphism of $F_n$ to $F_{n+1}$ so that it acts trivially on the new generator. Several authors have considered homological stability for this sequence, with various coefficients.  Homological stability with $\bQ$-coefficients was proven by Hatcher and Vogtmann \cite{HatcherVogtmannCerfTheory} and later with $\bZ$-coefficients by the same authors \cite{HatcherVogtmannOut}. The stable rational cohomology was computed and proven to be trivial by Galatius \cite{Galatius11}, confirming a conjecture by Hatcher and Vogtmann \cite{HatcherVogtmannConjecture}. In this paper we compute the stable cohomology with rational \textit{bivariant twisted coefficients}. To define these, we start by letting $H_\bZ(n):=H_1(F_n;\bZ)$. For $p,q\ge 0$, we define 
$$K^{p,q}_\bZ(n):=H_\bZ(n)^{\otimes p}\otimes H^*_\bZ(n)^{\otimes q},$$
where we write $H_\bZ^*(n):=\Hom_\bZ(H_\bZ(n),\bZ)$ for the linear dual.

\begin{remark}
    The terminology ``bivariant'' coefficients comes from the language of functor homology, since $H_\bZ(n)$ defines the components of a covariant functor from the category of finitely generated free groups to the category of abelian groups, whereas $H^*_\bZ(n)$ defines the components of a contravariant functor on this category.
\end{remark}

\noindent Letting $\Sigma_k$ denote the symmetric group on $k$ letters, note that $\Sigma_p\times\Sigma_q$ acts on $K^{p,q}_\bZ(n)$ by permuting the tensor factors. Since this action commutes with the diagonal $\Aut(F_n)$-action, this induces an action on the cohomology groups as well.

The inclusion $F_n\hookrightarrow F_{n+1}$ induces a $\bZ[\Aut(F_n)]$-linear map $H^*_\bZ(n+1)\to H^*_\bZ(n)$, where the source is considered a $\bZ[\Aut(F_n)]$-module via the stabilization map $\Aut(F_n)\to\Aut(F_{n+1})$. Since $F_{n+1}\cong F_n*\bZ$, the inclusion $F_n\hookrightarrow F_{n+1}$ has a retraction given by the projection $F_{n}*\bZ\to F_n$. This induces an $\bZ[\Aut(F_n)]$-linear map $H_\bZ(n+1)\to H_\bZ(n)$. Applying these factor-wise gives us a $\bZ[\Aut(F_n)]$-linear map $K^{p,q}_\bZ(n+1)\to K_\bZ^{p,q}(n)$ for every $p,q\ge 0$, which induces maps
\begin{equation}\label{eq:Kpq-stabilization}
    H^*\left(\Aut(F_{n+1});K_\bZ^{p,q}(n+1)\right)\to H^*\left(\Aut(F_n);K_\bZ^{p,q}(n)\right)
\end{equation}
on cohomology groups. 

The coefficients $K^{p,q}_\bZ(n)$ are examples of so called \textit{coefficients of finite degree}, so it follows from results of Randal-Williams and Wahl \cite[Theorem G]{WahlRW} that the map \eqref{eq:Kpq-stabilization} is an isomorphism for $n$ sufficiently large in comparison to $*$, $p$ and $q$. We define the \textit{stable} cohomology as the limit 
$$H_{\stable}^*\left(\Aut(F_\infty),K_\bZ^{p,q}\right):=\lim_{n\to\infty }H^*(\Aut(F_n),K_\bZ^{p,q}(n)),$$
where the limit is taken with respect to the maps \eqref{eq:Kpq-stabilization}.

In this paper we will work over $\bQ$, so we let $H(n):=H_\bZ(n)\otimes\bQ$ and $K^{p,q}(n):=K_\bZ^{p,q}(n)\otimes\bQ$. The goal of the paper is to compute $H_\stable^*(\Aut(F_\infty),K^{p,q})$ (defined similarly as the integral stable cohomology), for any $p,q\ge 0$.

There are several previous results in this direction. Using functor homology methods, it was proven by Djament and Vespa \cite[Théorème 1]{DjamentVespa} that for all $q>0$, $H^*_\stable(\Aut(F_\infty),H^*_\bZ(\infty)^{\otimes q})=0$ (note that this result is over the integers). This was also shown independently by Randal-Williams \cite[Theorem A(i)]{RW18}, using geometric methods. 

With the coefficients $H(n)^{\otimes p}$, the stable cohomology was computed by Randal-Williams \cite[Theorem A(ii)]{RW18} and independently by Vespa \cite[Theorem 4]{Vespa18}, by applying a result of Djáment \cite[Théorème 1.10]{DjamentHodge}.

The stable cohomology with the coefficients $K^{p,q}(n)$, for general $p,q\ge 0$, has also been studied by Djament, who gave a conjectural description of the stable cohomology groups \cite[Théorème 7.4]{DjamentHodge}. The main result of this paper confirms the conjecture of Djament, essentially by pushing the methods of Randal-Williams a bit further.

To describe the stable cohomology groups, let us denote by $\cP_{p,q}$ the vector space spanned by partitions of the set $\{1,2,\ldots, p\}$, with at least $q$ parts, $q$ of which are labeled from 1 to $q$ and the remaining parts unlabeled. The symmetric groups $\Sigma_p$ and $\Sigma_q$ act on $\cP_{p,q}$ by permuting the elements of $\{1,\ldots,p\}$ and the labels. Letting $\sgn_p$ and $\sgn_q$ denote the sign representations of $\Sigma_p$ and $\Sigma_q$ respectively, we may now state our main theorem as follows:

\begin{thmxx} For all $p,q\ge 0$ and $2{*}\le n-p-q-3$ we have 
\begin{align*}
    H^*\left(\Aut(F_n);K^{p,q}(n)\right)\cong\begin{cases}\cP_{p,q}\otimes\sgn_p\otimes\sgn_q&\text{if }*=p-q\\ 0&\text{ otherwise},\end{cases}
\end{align*}
as $\Sigma_p\times\Sigma_q$-representations.\label{thm:A}
\end{thmxx}

\noindent The stable range in Theorem \ref{thm:A} follows directly from \cite[Theorem 3.2]{RW18}, combined with \cite[Definition 3.3]{RW18} and the discussion after Remark 3.4 of the same paper, which shows that $K_\bZ^{p,q}(n)$ defines a (split) coefficient system of degree $p+q$. For this reason, we will not go into the proof of this range in more detail in this paper.

\subsection{Some relations to other results} Let us briefly describe some connections of Theorem \ref{thm:A} to other results in the litterature.

\subsubsection{Relation to Djament's conjecture} The conjecture of Djament \cite[Théorème 7.4]{DjamentHodge} is stated in quite different terms than Theorem \ref{thm:A}. Letting $\mathrm{gr}$ denote the category of finitely generated free groups and group homomorphisms and $\mathcal{F}(\mathrm{gr})$ the category of functors $\mathrm{gr}\to\Ab$, where $\Ab$ is the category of abelian groups, let $\mathfrak{a}_\bQ$ denote the \textit{rational abelianization functor}, i.e.\ abelianization postcomposed with tensoring with $\bQ$, and for $p\ge 0$, let $T^p,\Lambda^p:\Ab\to\Ab$ denote the $p$th tensor and exterior power functors, respectively. Djament's conjecture is stated in terms of homology, but using \cite[Equation 4]{Vespa18}, we obtain that the dual statement of the conjecture is that
\begin{equation}\label{eq:Djament's-conj}
    H^*_{\mathrm{st}}(\Aut(F_\infty),K^{p,q})\cong\bigoplus_{i+j=*} \mathrm{Ext}^i_{\mathcal{F}(\mathrm{gr})}\left((T^q\circ\mathfrak{a}_\bQ)\otimes(\Lambda^j\circ\mathfrak{a}_\bQ),T^{p}\circ\mathfrak{a}_\bQ\right).
\end{equation}
Using \cite[Theorem 1]{Vespa18}, we have that as vector spaces
\begin{align*}
    \mathrm{Ext}^i_{\mathcal{F}(\mathrm{gr})}\left((T^q\circ\mathfrak{a}_\bQ)\otimes(\Lambda^j\circ\mathfrak{a}_\bQ),T^{p}\circ\mathfrak{a}_\bQ\right)\cong\begin{cases}
        \bQ[\mathrm{Surj}(p,q+j)/\Sigma_j]&\text{ if }i+j=p-q\\
        0&\text{ otherwise},
    \end{cases}
\end{align*}
where $\mathrm{Surj}(p,q+j)$ is the set of surjections from $\{1,\ldots,p\}$ to $\{1,\ldots,q+j\}$ and where $\Sigma_j$ acts by postcomposition by bijections via the standard injection $\Sigma_j\hookrightarrow\Sigma_{q+j}$. As representations, this isomorphism holds up to sign representations of $\Sigma_p$ and $\Sigma_q$, described by \cite[Proposition 2.5]{Vespa18}. There is an isomorphism of $\Sigma_{p}\times\Sigma_q$-representation from the vector space on $\mathrm{Surj}(p,q+j)/\Sigma_j$, with the obvious $\Sigma_p\times\Sigma_q$-action, to $\cP(p,q)$ given by sending the equivalence class of a surjection $f$ to the labeled partition 
$$\{(f^{-1}(1),1),\ldots,(f^{-1}(q),q), f^{-1}(q+1),\ldots, f^{-1}(q+j)\},$$
giving us the connection between the description \eqref{eq:Djament's-conj} and that of Theorem \ref{thm:A}.

\subsubsection{The wheeled PROP structure} The stable cohomology groups of Theorem \ref{thm:A} were also recently studied by Kawazumi and Vespa in
\cite{KawazumiVespa}, who proved that the collection of the stable cohomology groups, for all $p, q \ge 0$, can be
given the structure of a \textit{wheeled PROP} in the category of graded $\bQ$-vector spaces (for a definition, see \cite{MarklMerkulovShadrin}). In these terms, the conjecture by Djament can be stated by
saying that this is the wheeled PROP generated by one graded commutative generator $h_1$ in bi-arity
$(p, q) = (2, 1)$ and degree 1, satisfying the relation 
$$(h_1\otimes 1)\circ h_1+(1\otimes h_1)\circ h_1=0,$$
where $1$ is the identity operation, $\otimes$ is the horizontal composition and $\circ$ is the vertical composition (this relation may be called ``graded associativity''). The conjectural generator was introduced by Kawazumi in \cite{Kawazumi05} and independently
by Farb and Cohen-Pakianathan (unpublished). In their paper, Kawazumi and Vespa define an injective morphism of wheeled PROP from the wheeled PROP generated by one operation as above, satisfying the stated relations, to the wheeled PROP formed by the stable cohomology groups. Combining this with Theorem \ref{thm:A}, it follows that this is indeed an isomorphism of wheeled PROPs.

\subsubsection{Cohomology of the $\mathrm{IA}$-automorphism group} The conjecture of Djament also appears in recent work by Habiro and Katada \cite[Conjecture 9.4]{HabiroKatada}, where the authors prove that the combination of this conjecture with two other conjectures produces a conjectural description of the stable rational cohomology of the subgroup $\mathrm{IA}_n\subseteq \Aut(F_n)$, which is the kernel of the homomorphism
$$\Aut(F_n)\to\Aut(F_n^{\mathrm{ab}})\cong\cGL_n(\bZ).$$
This approach was also pushed a bit further in recent work of the author \cite{Lindell24Walled}.

\subsection{Cohomology of outer automorphism groups} Theorem \ref{thm:A} can also be used to obtain some results about the cohomology of $\Out(F_n):=\Aut(F_n)/\Inn(F_n)$, where $\Inn(F_n)$ denotes the subgroup of inner automorphisms. Since inner automorphisms of a group act trivially on the (co)homology of that group, the $\Aut(F_n)$-representations $H(n)$ and $H^*(n)$ factor through $\Out(F_n)$, so we may consider the cohomology groups
$$H^*(\Out(F_n);K^{p,q}(n))$$
for any $p,q\ge 0$. It follows from a theorem of Kawazumi \cite[Theorem 7.1]{Kawazumi05} that for any $\Out(F_n)$-representation $M$, we have
\begin{equation}\label{eq:Kawazumi-decomposition}
    H^*(\Aut(F_n);M)\cong H^*(\Out(F_n);M)\oplus H^{*-1}(\Out(F_n);M\otimes H^*(n)).
\end{equation}
In particular, the cohomology of $\Out(F_n)$ with coefficients in $M$ is a summand of that of $\Aut(F_n)$ with the same coefficients, and the pullback map in cohomology induced by the quotient homomorphism $\Aut(F_n)\to\Out(F_n)$ is injective. This gives us the following immediate corollary of Theorem \ref{thm:A}:

\begin{corollary}
    If $*\neq p-q$ and $2{*}\le n-p-q-3$ we have
    $$H^*(\Out(F_n);K^{p,q}(n))=0,$$
    for all $p,q\ge 0$.
\end{corollary}

\noindent In the cases where either $p=0$ or $q=0$, this result was proven by Randal-Williams \cite[Theorem B]{RW18}. He also proved the following:

\begin{theorem}{\cite[Theorem B(ii)]{RW18}}\label{thm:RW-Out}
    If $n\ge 4p+3$, we have that $H^p(\Out(F_n);K^{p,0}(n))$ is isomorphic to the subrepresentation of $\cP_{p,0}\otimes\sgn_p$ spanned by partitions with no parts of size 1.
\end{theorem}

\noindent Using Equation \eqref{eq:Kawazumi-decomposition}, we get that for any $p,q\ge 0$
    $$\dim H^{*}(\Out(F_n);K^{p,q}(n))= \dim H^{*+1}(\Aut(F_n);K^{p,q-1}(n))-\dim H^{*+1}(\Out(F_n);K^{p,q-1}(n)),$$
    in a stable range $n\gg *+p+q$. Using Theorem \ref{thm:RW-Out} and Theorem \ref{thm:A}, we can thus recursively compute the dimension of $H^{p-q}(\Out(F_n),K^{p,q}(n))$. However, it seems more difficult to obtain a closed formula for the dimension. It seems even more difficult to determine the stable structure of $H^{p-q}(\Out(F_n),K^{p,q}(n))$ as a representation of $\Sigma_p\times\Sigma_q$ and more difficult still to determine the wheeled PROP-structure on the collection of all of the stable cohomology groups.

\subsection{Overview of the paper}\label{sec:overview} In Section \ref{sec:prel}, we first recall some preliminaries about induced representations of different symmetric groups. We then introduce a number of permutation modules of labeled partitions, which is what is used to compute the explicit description of the stable cohomology groups in Theorem \ref{thm:A}. The most important part of this subsection is Proposition \ref{prop:filtered-spectral-seq}, which describes a spectral sequence in group cohomology associated to a short exact sequence of representations. This section can be skipped on a first reading and referred back to when necessary.

In Section \ref{sec:markedgraphs}, we recall two different topological characterizations of the groups 
$$A_n^s:=\Aut(F_n)\ltimes F_n^{\times (s-1)},$$
defined for $s\ge 1$, in terms of mapping class groups of graphs and 3-manifolds. We also introduce for each $s\ge 1$ an $\bZ[A_n^s]$-module $H_\bZ(n,s):=H_1(F_{n+s-1};\bZ)$, on which $A_n^s$ acts via a certain homomorphism $A_n^s\to\Aut(F_{n+s-1})$ defined below (see \eqref{eq:alphans} below).

The steps of the proof of Theorem \ref{thm:A} can be summarized as follows: \begin{enumerate}[I.]\itemsep0.5em 
    \item We start by considering the inclusion
    $$\mu_n^s:A_n^s\to A_n^{s+1}.$$
    The projection $F_{n+s}\cong F_{n+s-1}*\bZ\to F_{n+s-1}$ induces an $A_n^s$-equivariant map $H_\bZ(n,s+1)\to H_\bZ(n,s)$ and the first step of the proof is to show that the induced map
    $$H^*(A_n^{s+1},H_\bZ(n,s+1)^{\otimes p})\to H^*(A_n^s, H_\bZ(n,s)^{\otimes p})$$
    is an isomorphism in a stable range $n\gg *+p$. For $s=1$, we have $A_n^s=\Aut(F_n)$ and $H_\bZ(n,s)=H_\bZ(n)$, and as the stable cohomology groups with the coefficients in tensor powers of $H(n)$ are already known by the results of Djament-Vespa and Randal-Williams cited above, the stability result of this step allows us to compute $H^*(A_n^s,H(n,s)^{\otimes p})$, in a stable range $n\gg *+p$, for any $s\ge 1$.
    
    This is an unsurprising result for experts and is proven using standard methods due to Randal-Williams and Wahl \cite{WahlRW}, combined with an idea by Hatcher and Wahl \cite{HatcherWahlErratum}. Since the proof is still quite long and requires a bit more background, it is left to the final Section \ref{sec:stability}.
    \item In the second step, we consider the inclusion $H(n,1)\to H(n,s+1)$ induced by the homomorphism $F_n\hookrightarrow F_{n+s}$. This map is in fact $A_n^{s+1}$-equivariant, with the action on the source induced by the projection $A_n^{s+1}\to\Aut(F_n)$. The action of $A_n^{s+1}$ on $H(n,s+1)$ is defined in such a way that it acts trivially on the cokernel of the inclusion map, so we get a short exact sequence 
    $$0\to H(n,1)\to H(n,s+1)\to\bQ^{s}\to 0$$
    of $A_n^{s+1}$-representations. Applying Lemma \ref{lemma:extension} to this short exact sequence gives us a spectral sequence, which allows us to use the result of the first step to compute $H^*(A_n^{s+1},H(n,1)^{\otimes p})$, for any $p\ge 0$, in a stable range $n\gg *+p$.
    \item In the third step, we consider the short exact sequence
    $$1\to F_{n}^{\times s}\to A_n^{s+1}\to \Aut(F_n)\to 1$$
    and its associated Hochshild-Serre spectral sequence with coefficients twisted by the $A_n^{s+1}$-representation $H(n,1)^{\otimes p}$. As this representation factors through $\Aut(F_n)$, the spectral sequence has the form
    $$E_2^{i,j}=H^i(\Aut(F_n), H^j(F_n^{\times s},\bQ)\otimes H(n,1)^{\otimes p})\Rightarrow H^{i+j}\left(A_n^{s+1},H(n,1)^{\otimes p}\right).$$
    We prove that this spectral sequence degenerates on the $E_2$-page, so the result of the previous step allows us to compute the direct sum of the terms of its $E_2$-page, in a stable range.
    \item In the final step of the proof, we use the fact that $H^j(F_n^{\times s},\bQ)$ decomposes as a direct sum of copies of $H^*(n)^{\otimes j}$ and again that the stable cohomology groups of $\Aut(F_n)$ with coefficients in $H(n)^{\otimes p}$ are known from the results of Djament-Vespa and Randal-Williams, to inductively compute $H^i(\Aut(F_n),K^{p,j}(n))$, in a stable range, from the result of the previous step.
\end{enumerate}

\noindent The first step of the proof is explained in more detail at the end of Section \ref{sec:markedgraphs}, whereas the steps II-IV are explained in Section \ref{sec:proofofthmA}.

\subsection{Acknowledgements} I would like to thank my PhD supervisor Dan Petersen, whose help was invaluable throughout this project. I would also like to thank Oscar Randal-Williams, Arthur Soulié, Christine Vespa and Nathalie Wahl for their interest in the project and for a number of very useful discussions. In addition, I would like to thank Fabian Hebestreit, Manuel Krannich and Andrew Putman for useful feedback and discussion in conjunction with the defense of my PhD thesis, where an earlier version of this paper was included and where they were on the committee, together with Christine Vespa. I am very grateful for the large number of very helpful comments on earlier versions of this paper that I have received from all of the above, as well as from Mai Katada. I would finally like to thank the referee for carefully reading the paper and coming with many useful comments.

\section{Preliminaries}\label{sec:prel}

\subsection{Conventions} Let us fix some conventions: \begin{itemize}
    \item We fix $\bQ$ as our ground field. In particular, all (co)homology is taken with rational coefficients, unless stated otherwise.
    \item For $S$ a set, we write $\bQ S$ for the vector space generated by $S$.
\item For a positive integer $k$, we write $[k]:=\{1,\ldots,k\}$.

\end{itemize}

\subsection{Induced representations of symmetric groups} \label{sec:inducedreps} For non-negative integers $i,j$, consider the standard embedding $\Sigma_i\times\Sigma_j\hookrightarrow\Sigma_{i+j}$. At several points below we consider representations of symmetric groups $\Sigma_{k}$ that arise by induction of representations of $\Sigma_i\times\Sigma_j$, with $i+j=k$. In this section we recall the definition and describe the structure of these representations in some additional detail.  

Given a $\Sigma_i\times\Sigma_j$-representation $V$, we define 
$$\Ind_{\Sigma_i\times\Sigma_j}^{\Sigma_{i+j}} V:=\bQ[\Sigma_{i+j}]\otimes_{\bQ[\Sigma_i\times\Sigma_j]} V.$$
A full set of representatives of the cosets in $\Sigma_{i+j}/(\Sigma_i\times\Sigma_j)$ is given by choosing, for each partition of the set $[i+j]$ into a disjoint union $I\sqcup J$ with $|I|=i$ and $|J|=j$, a permutation $\sigma_{I,J}$ such that $\sigma_{I,J}(\{1,\ldots,i\})=I$ and $\sigma_{I,J}(\{i+1,\ldots,i+j\})=J$. In other words, for every $\sigma\in\Sigma_{i+j}$ there is some $\sigma_{I,J}$ and $h\in\Sigma_{i}\times\Sigma_j$ such that $\sigma=\sigma_{I,J}h$.

This means that as a vector space we have
$$\Ind_{\Sigma_i\times\Sigma_j}^{\Sigma_{i+j}}V\cong\bigoplus_{\substack{I\sqcup J=[i+j]\\|I|=i}}\bQ\{\sigma_{I,J}\}\otimes V$$
and with the $\Sigma_{i+j}$-action described as follows: if $v\in V$ and $\sigma\in\Sigma_{i,j}$, there is some $\sigma_{I',J'}$ and $h\in \Sigma_i\times\Sigma_j$ such that $\sigma\cdot\sigma_{I,J}=\sigma_{I',J'}\cdot h$. We thus have 
\begin{equation}\label{eq:inductionaction}
    \sigma\cdot(\sigma_{I,J}\otimes v)=(\sigma_{I',J'}\cdot h)\otimes v=\sigma_{I',J'}\otimes (hv).
\end{equation}

\subsection{Labeled partitions} Next, we introduce some permutation modules of labeled partitions that we use below to obtain the explicit description of the cohomology groups in Theorem \ref{thm:A}. 

\begin{definition}
    For finite sets $S,T$, we define the following $\Sigma_S\times\Sigma_T$-representations:\begin{enumerate}
        \item $\cP_{S,\le T}$ is the $\bQ$-vector space with basis consisting of partitions of $S$ with some of the parts labeled by elements of $T$ and the remaining parts unlabeled and where we require that no two parts are labeled by the same element of $T$.
        \item $\cP_{S,T}$ is the subspace of $\cP_{S,\le T}$ spanned by those labeled partitions of $S$ with precisely $|T|$ labeled parts.
        \item $\overline{\cP}_{S,\le T}$ is the subspace of $\cP_{S,\le T}$ spanned by the labeled partitions of $S$ with no unlabeled parts.
    \end{enumerate}
    In each case, there is an obvious $\Sigma_S\times\Sigma_T$-action given by permutation. If either $S=[k]$ or $T=[l]$ for some $k,l\ge 0$, we replace $S$ and $T$ in the notation by $k$ or $l$, respectively.
\end{definition}

\begin{lemma}\label{lemma:partitions-tensor-prod}
    Suppose that $S,T$ finite sets, we have $\overline{\cP}_{S,\le T}\cong(\bQ^{\times T})^{\otimes S}$, as $\Sigma_S\times\Sigma_T$-representations.
\end{lemma}

\begin{proof}
    Let us write $\{e_t\mid t\in T\}$ for the canonical basis elements of $\bQ^{\times T}$ and suppose that $S=\{s_1,\ldots,s_k\}$. We define a map $(\bQ^{\times T})^{\otimes S}\to\overline\cP_{S,T}$ which on the basis elements is given by sending a pure tensor $e_{t_{s_1}}\otimes\cdots\otimes e_{t_{s_k}}$, to the partition of $S$ where we define $s,s'$ to be in the same part if in the corresponding tensor factors, $t_{s}=t_{s'}$, and we decorate this part by $t_{s}$. This map is clearly equivariant and it has an inverse defined on the basis of labeled partitions by sending a labeled partition to the tensor which is $e_{t}$ in the tensor factors corresponding to $s\in S$, if $s$ is in the part of the labeled partition which is labeled by $t$.
\end{proof}

\begin{lemma}\label{lemma:partitions-ind}
    We have isomorphisms of $\Sigma_p\times\Sigma_q$-representations\begin{enumerate}[(i)]
        \item\label{partitions-ind1} $\cP_{p,\le q}\cong\bigoplus_{k=0}^q\Ind_{\Sigma_k\times\Sigma_{q-k}}^{\Sigma_q}\cP_{p,k}$,
        \item\label{partitions-ind2}  $\cP_{p,\le q}\cong\bigoplus_{k=0}^p\Ind_{\Sigma_k\times\Sigma_{p-k}}^{\Sigma_p}(\cP_{k,0}\otimes\overline{\cP}_{p-k,\le q})$.
    \end{enumerate}
\end{lemma}

\begin{proof}
For \eqref{partitions-ind1}, we start by introducing the notation that for $k\le q$, $\cP_{p,k\le q}$ is the permutation module on the set of labeled partitions of $[p]$ with at least $k$ parts, labeled by $k$ distinct elements of $[q]$, and the remaining parts unlabeled. It is thus immediate that as $\Sigma_p\times\Sigma_q$-representations
\begin{equation}\label{eq:decomp-of-P_p,le-q}
    \cP_{p,\le q}\cong \bigoplus_{k=0}^q \cP_{p,k\le q}.
\end{equation}
As vector spaces we have
\begin{equation}\label{eq:P_p,k-le-q-as-vector-space}
    \cP_{p,k\le q}\cong \bigoplus_{\substack{I\subseteq [q]\\|I|=k}}\cP_{p,I},
\end{equation}
with the obvious $\Sigma_p\times\Sigma_q$-action given by permutation. By the description of Section \ref{sec:inducedreps}, we have that for any $k\le q$
\begin{equation}\label{eq:induction-of-P_p,k}
    \Ind_{\Sigma_k\times\Sigma_{q-k}}^{\Sigma_q}\cP_{p,k}\cong\bigoplus_{\substack{I\sqcup J= [q]\\|I|=k}}\bQ\{\sigma_{I,J}\}\otimes \cP_{p,k},
\end{equation}
with the $\Sigma_q$-action described by Equation \eqref{eq:inductionaction}. For any $I\subseteq [q]$ with $|I|=k$, we have a bijective linear map $\bQ\{\sigma_{I,J}\}\otimes\cP_{p,k}\to\cP_{p,I}$ given by applying $\sigma_{I,J}$ to the labels of a labeled partition, and these assemble to a map
$$\Ind_{\Sigma_{k}\times\Sigma_{q-k}}^{\Sigma_q}\cP_{p,k}\to \cP_{p,k\le q},$$
which is also bijective by combining the isomorphisms \eqref{eq:P_p,k-le-q-as-vector-space} and \eqref{eq:induction-of-P_p,k}. It is also easily verified to be $\Sigma_p\times\Sigma_q$-equivariant, using Equation \eqref{eq:inductionaction}. Thus \eqref{partitions-ind1} follows by the isomorphism \eqref{eq:decomp-of-P_p,le-q}.

For \eqref{partitions-ind2}, we use similarly that
\begin{align*}
    \Ind_{\Sigma_{k}\times\Sigma_{p-k}}^{\Sigma_p}(\cP_{k,0}\otimes\overline{\cP}_{p-k,\le q})\cong \bigoplus_{\substack{I\sqcup J=[p]\\|I|=k}}\bQ\{\sigma_{I,J}\}
\end{align*}
so applying $\sigma_{I,J}$ to the partitions of $\cP_{k,0}$ and $\overline{\cP}_{p-k,\le q}$, we get
\begin{align*}
     \bigoplus_{k=0}^p\Ind_{\Sigma_{k}\times\Sigma_{p-k}}^{\Sigma_p}(\cP_{k,0}\otimes\overline{\cP}_{p-k,\le q})\cong\bigoplus_{k=0}^p\bigoplus_{\substack{I\sqcup J=[p]\\ |I|=k}} (\cP_{I,0}\otimes\overline{\cP}_{J,\le q})\cong \cP_{p,\le q}.\ \ \ \ \qedhere
\end{align*}
\end{proof}

\subsection{A key spectral sequence} In this subsection, we prove the following proposition, which is of key importance in Section \ref{sec:proofofthmA} below:

\begin{proposition}\label{prop:filtered-spectral-seq}
    Let $G$ be a group and 
    $$0\to A\to B\to C\to 0$$
    a short exact sequence of $\bQ[G]$-modules. For any $p\ge 0$, there exists a convergent spectral sequence of the form
    $$E_1^{k,l}=H^{k+l}\left(G,\Ind_{\Sigma_k\times\Sigma_{p-k}}^{\Sigma_p}\left(B^{\otimes k}\otimes (C[1])^{\otimes (p-k)}\right)\right)\Rightarrow H^{k+l}(G,A^{\otimes p}),$$
     where $C[1]$ denotes $C$ put in degree 1 and $\Ind_{\Sigma_k\times\Sigma_{p-k}}^{\Sigma_p}\left(B^{\otimes k}\otimes (C[1])^{\otimes (p-k)}\right)$ is considered as a trivial cochain complex of $\bQ[G]$-modules, concentrated in degree $p-k$.
\end{proposition}

\noindent We prove this using the following lemma:

\begin{lemma}\label{lemma:extension}
    Let $R$ be an associative $\bQ$-algebra and let $\A$ be the category of left $R$-modules. If 
    $$0\to A\to B\to C\to 0$$
    is a short exact sequence in $\A$ and $p\ge 1$, there exists a $\Sigma_p$-equivariant resolution $\cE$ of $A^{\otimes p}$ in the category $\coCh^{\ge }(\A)$ of non-negatively graded cochain complexes in $\A$ and a descending filtration
    $$0=F_{p+1}\cE\subset F_k\cE\subset\cdots\subset F_0\cE=\cE$$
    of $\cE$, where each step is equipped with a $\Sigma_p$-action compatible with that on $\cE$, whose graded pieces are
    $$\Gr_k^F\cE=F_k\cE/F_{k+1}\cE\cong \Ind_{\Sigma_k\times\Sigma_{p-k}}^{\Sigma_p} \left(B^{\otimes k}\otimes (C[1])^{\otimes (p-k)}\right).$$
\end{lemma}

\begin{proof}
    Given a short exact sequence as in the lemma, we first note that we can define a filtration
    $$0=F_{p+1}B^{\otimes p}\subset F_pB^{\otimes p}\subset\cdots\subset F_0B^{\otimes p}=B^{\otimes p}$$
    by
    $$F_kB^{\otimes p}:=\sum_{\substack{I\sqcup J= [p]\\|I|\ge k}}A^{\otimes I}\otimes B^{\otimes J}.$$
    and whose graded pieces thus are
    $$\Gr_k^FB^{\otimes p}=\bigoplus_{\substack{I\sqcup J=[p]\\|I|=k}}A^{\otimes I}\otimes C^{\otimes J}.$$
    Note that there is a $\Sigma_p$-action on each summand of $F_kB^{\otimes p}$, compatible with action on $B^{\otimes p}$ given by permuting the factors. Furthermore, using Equation \eqref{eq:inductionaction} above, we see that we have a $\Sigma_p$-equivariant isomorphism
    $$\Gr_k^FB^{\otimes p}\cong \Ind_{\Sigma_k\times\Sigma_{p-k}}^{\Sigma_p}\left(A^{\otimes k}\otimes C^{\otimes (p-k)}\right).$$
    Next, we note that our short exact sequence in $\A$ defines a quasi-isomorphism of complexes 
    $$A\to(B\to C)$$
    in $\coCh(\A)$, where we consider $A$ as a complex concentrated in degree 0 and $(B\to C)$ as a complex concentrated in degrees 0 and 1. Thus $\cE:=(B\to C)$ is a resolution of $A$. Furthermore, the two-term complex $\cE$ itself sits in a short exact sequence
    $$0\to B\to \cE\to C[1]\to 0$$
    in the category $\coCh(\A)$, where $B$ is considered as a complex concentrated in degree 0. We can thus apply the filtration above to this short exact sequence, giving us precisely a filtration as in the statement of the lemma.
\end{proof}

\begin{proof}[Proof of Proposition \ref{prop:filtered-spectral-seq}]
    Recall (see for example \cite[Chapter VII.5]{BrownGroupCohomology}) that for $\cC\in\coCh^{\ge}(\bQ[G])$ and $\cP$ a projective resolution of $\bQ$ in $\Ch(\bQ[G])$, the category of \textit{chain complexes} of $G$-representations, we have
    $$H^*(G,\cC^\bullet)=H^*(\mathscr{H}om_G(\cP,\cC)),$$
    where $\mathscr{H}om_G(\cP,\cC)$ is the total cochain complex of the double complex with $\Hom_G(\cP_q,\cC^p)$ in bidegree $(p,q)$, and differentials induced by those of $\cC$ and $\cP$. 

    Let $\cE$ and $F_\bullet\cE$ be as in the previous lemma and let $\cP\in\Ch(\bQ[G])$ be a projective resolution of $\bQ$. It follows by projectivity that 
    $$\mathscr{H}om_G(\cP,-):\coCh^{\ge 0}(\bQ[G])\to \coCh^{\ge 0}(\bQ[G])$$
    is an exact functor. Thus we get an induced filtration $\tilde{F}_\bullet\left(\mathscr{H}om_G(\cP,\cE)\right)$  of $\mathscr{H}om_G(\cP,\cE)$ in $\coCh^{\ge 0}(\bQ[G])$ with
    $$\tilde{F}_k\left(\mathscr{H}om_G(\cP,\cE)\right)=\mathscr{H}om_G(\cP,F_k\cE)$$
    and whose graded pieces are
    $$\Gr_k^{\tilde{F}}\left(\mathscr{H}om_G(\cP,\cE)\right)=\mathscr{H}om_G(\cP,\Ind_{\Sigma_k\times\Sigma_{p-k}}^{\Sigma_p}(B^{\otimes k}\otimes C[1]^{\otimes (p-k)})).$$
    Since
    $$H^*(G,A^{\otimes p})\cong H^*(G,\cE)=H^*\left(\mathscr{H}om_G(\cP,\cE)\right)$$
    and 
    $$H^*\left(G,\Ind_{\Sigma_{k}\times\Sigma_{p-k}}^{\Sigma_p}\left(B^{\otimes k}\otimes C[1]^{\otimes (p-k})\right)\right)=H^*\left(\mathscr{H}om_G(\cP,\Ind_{\Sigma_k\times\Sigma_{p-k}}^{\Sigma_p}(B^{\otimes k}\otimes C[1]^{\otimes (p-k)}))\right)$$
    we get the desired spectral sequence by \cite[Theorem 2.6]{McCleary-UsersGuide}.
\end{proof}

\section{Mapping class groups of graphs and 3-manifolds}\label{sec:markedgraphs}

\noindent For $s\ge 1$, let $A_n^s:=\Aut(F_n)\ltimes F_n^{\times (s-1)}$. In this section we recall two different topological characterizations of this group, as well as two different stabilization maps between them. We then state a stability theorem for the cohomology of these groups with certain twisted coefficients, whose proof is the topic of Section \ref{sec:stability}.

\subsection{Mapping class groups of graphs} Our first alternative characterization of $A_n^s$ is in terms of graphs. For $s\ge 1$, we let $R_{n,s}$ denote the space in Figure \ref{fig:ThornedRose}, i.e.\ the wedge sum of $n$ circles with $s-1$ unit intervals, based at one of their end-points. Denoting by $x_1$ the wedge point and $x_2,\ldots,x_s$ the non-basepoint end-points of the intervals, we define
$$G_n^s:=\pi_0\hAut_{\{x_1,\ldots,x_s\}}\left(R_{n,s}\right),$$
where $\hAut_{\{x_1,\ldots,x_s\}}\left(R_{n,s}\right)$ is the topological monoid of homotopy self-equivalences (i.e.\ \textit{homotopy automorphisms}) of $R_{n,s}$ that fix $x_1,\ldots,x_s$ pointwise. We have an isomorphism $G_n^s\cong A_n^s$ (see for example \cite{HatcherWahlBoundaries}).

\begin{figure}[h]
    \centering
    \includegraphics[scale=0.3]{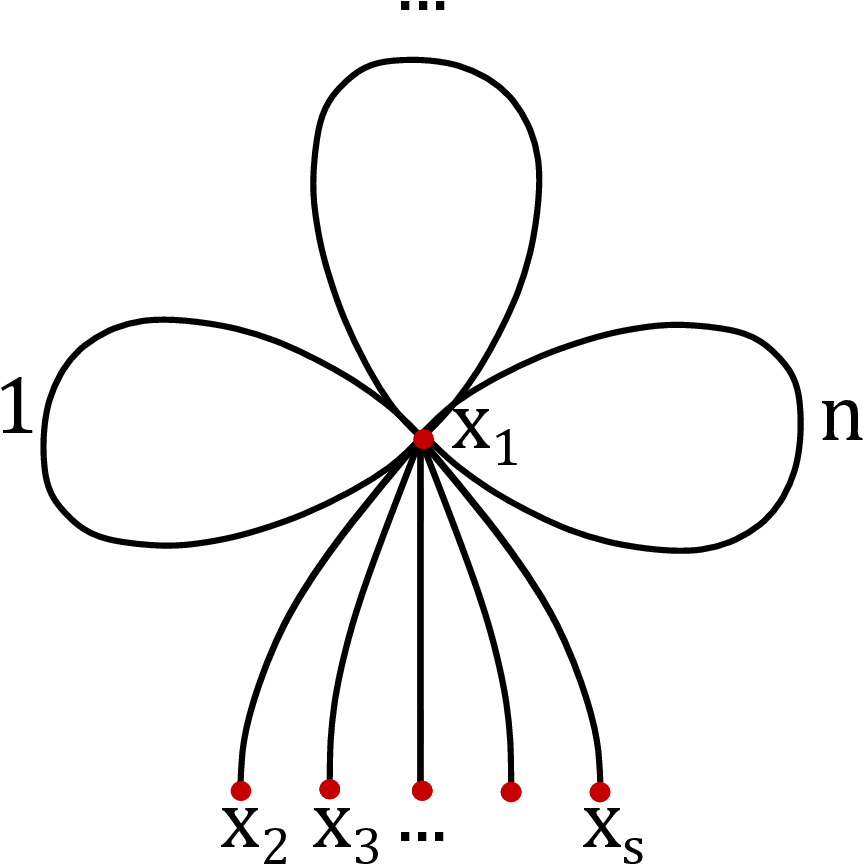}
    \caption{The space $R_{n,s}$.}
    \label{fig:ThornedRose}
\end{figure}

Let us describe the isomorphism $G_n^s\cong A_n^s$ explicitly in terms of generators of $A_n^s$. The first three kinds of generators are of the form $(\phi,1,\ldots,1)$, where $\phi$ is a \textit{Nielsen transformation} in $\Aut(F_n)$:\begin{enumerate}[(i)]
    \item\label{AnsGen1} $\phi\in\Aut(F_n)$ is given by permuting the generators $x_1,\ldots,x_n$ of $F_n$. Under the isomorphism $A_n^s\cong G_n^s$, these correspond to the (classes of) self-homeomorphisms of $R_n^s$ given by permuting the $S^1$-wedge summands in the corresponding way. 
    \item\label{AnsGen2} $\phi\in\Aut(F_n)$ is given by mapping one generator $x_i$ to its inverse $x_i^{-1}$. Under the isomorphism $A_n^s\cong G_n^s$ this corresponds to the (class of the) homeotopy automorphisms which on the $i$th $S^1$-wedge summand  of $R_n^s$ is given by the degree $-1$ pointed self-homeomorphism, and the identity elsewhere.
    \item\label{AnsGen3} $\phi\in\Aut(F_n)$ is given by mapping $x_i$ to $x_jx_i$, for $i\neq j$ and fixing the remaining generators of $F_n$. Under the isomorphism $A_n^s\cong G_n^s$, this corresponds to the (class of the) homotopy automorphism of $R_n^s$ which ``wraps'' the first half of $i$th $S^1$-wedge summand around the $j$th wedge summand, and the second half along the $i$th $S^1$-wedge summand, as illustrated in Figure \ref{Fig:generatorsAns}, and which is the identity elsewhere.
    \item\label{AnsGen4} The remaining generators of $A_n^s$ are given by elements of the form $(\id_{F_n},1,\ldots, x_i,\ldots,1)$, non-trivial in the $k$th $F_n$-component, where $x_i\in F_n$ is one of the generators. Under the isomorphism $A_n^s\cong G_n^s$, the generators of $A_n^s$ which is $x_i$ in the $k$th $F_n$-factor, and the identity element in the remaining factors, corresponds to the (class of the) homotopy automorphism of $R_n^s$ which ``wraps'' the first half of the $k$th $I$-wedge summand around the $i$th $S^1$-wedge summand and sends the second half to the same $I$-wedge summand, as illustrated in Figure \ref{Fig:generatorsAns}.
\end{enumerate}

\begin{figure}[h]
    \centering
    \includegraphics[scale=0.25]{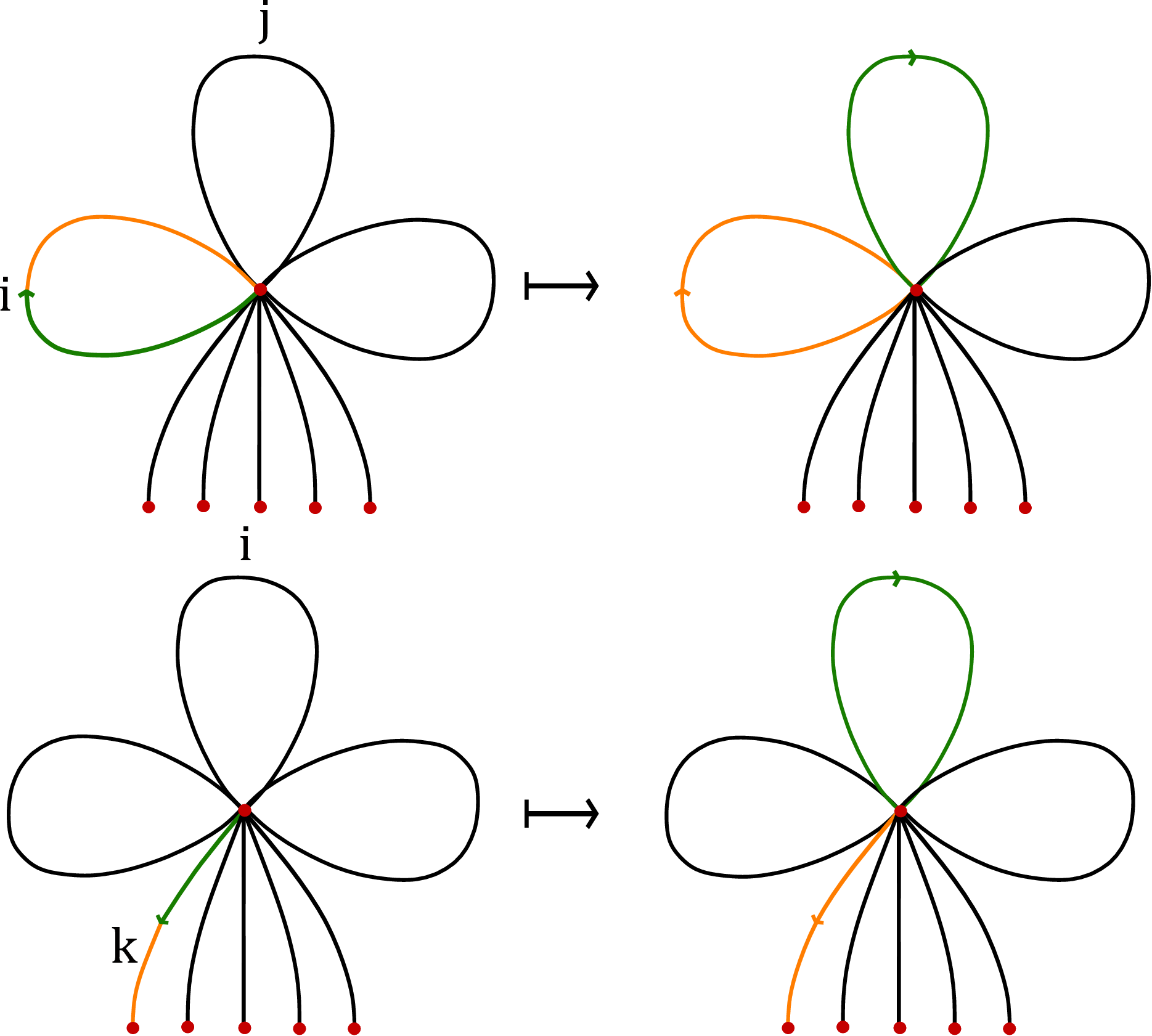}
    \caption{Illustration of the generators of $A_n^s$ of the forms \eqref{AnsGen3} and \eqref{AnsGen4}.}
    \label{Fig:generatorsAns}
\end{figure}

\subsection{Mapping class groups of 3-manifolds}\label{sec:MCG-3mflds} The group $A_n^s$ can also be characterized as a quotient of the mapping class group of the $3$-manifold $M_n^s:=\left(\#^n S^2\times S^1\right)\setminus\left(\sqcup^s\mathring D^3\right)$. Let 
$$\Mod(M_n^s):=\pi_0\Diff_\partial\left(M_n^s\right)$$
be the mapping class group of boundary preserving diffeomorphisms. This group has a normal subgroup generated by \textit{Dehn twists around embedded 2-spheres}, which we will denote by $\Tw(M_n^s)$ (see \cite[Section 2]{HatcherWahl3manifolds} for more details). Let us write $\Gamma_n^s:=\Mod(M_n^s)/\Tw(M_n^s)$. We have
$$\Gamma_n^s\cong G_n^s$$
and thus $A_n^s\cong\Gamma_n^s$ (see \cite[Proposition 1]{HatcherVogtmannOut} for an explicit description of the isomorphism).

\subsection{(Co)homological stability with twisted coefficients} The first step of the proof of Theorem \ref{thm:A} consists of computing the stable cohomology of $A_n^s$ with certain twisted coefficients, generalizing $H(n)$, that we introduce below.

This calculation is done using a (co)homological stability argument similar to the one used by Hatcher, Vogtmann and Wahl in \cite{HatcherVogtmannWahlErratum} to  prove the following:

\begin{theorem}{(\cite[Corollary on page 6]{HatcherVogtmannWahlErratum})}\label{thm:Hatcher-Vogtmann-Wahl}
For $s\ge 1$, let $\mu_n^s:A_n^s\to A_n^{s+1}$ be the inclusion map, induced by the inclusion $F_n^{\times(s-1)}\hookrightarrow F_n^{\times s}$ of the first $s-1$ coordinates. This map induces an isomorphism on integral cohomology in degrees $2{*}\le n-2$.
\end{theorem}

\noindent The idea of their proof is to consider two different stabilization maps that increase $n$, but keep $s$ constant, and which both factor through $\mu_n^s$. Let us introduce these. 

\begin{definition}
    For $s\ge 1$, we define
$$\phi_n^s:A_n^s\to A_{n+1}^s,$$
which on the $\Aut(F_n)$-component is given by extending an automorphism to the new generator by identity, and on the remaining components by the standard inclusion $F_n\hookrightarrow F_{n+1}$.
\end{definition}

\begin{remark}
    In terms of the groups $G_n^s$, $\phi_n^s$ is induced by extending a homotopy equivalence from $R_{n,s}$ to $R_{n+1,s}$ by letting it be the identity on the extra loop. 
\end{remark}

\begin{remark}
    In terms of the groups $\Gamma_n^s$, this stabilization can be defined as follows: we construct $M_{n+1}^s$ by taking the boundary connected sum of $M_n^s$ with the manifold $(S^2\times S^1)\setminus \mathring D^3$ and extending a diffeomorphism along to be the identity on this submanifold. This descends to a homomorphism $\Gamma_n^s\to\Gamma_{n+1}^s$, as it sends Dehn twists along embedded 2-spheres in $M_n^s$ to Dehn twists along embedded 2-spheres in $M_{n+1}^s$.
\end{remark}

\noindent Before defining our second stabilization map, we note that $\phi_n^s$ can be factored as follows: first, for $s\ge 1$, we note that in terms of the groups $G_n^s$, $\mu_n^s:G_n^s\to G_n^{s+1}$ is the map induced by extending a homotopy automorphism from $R_{n,s}$ to $R_{n,s+1}$ by identity. Secondly, for $s\ge 2$ we let 
\begin{equation}\label{eq:alphans}
    \alpha_n^s:G_n^s\to G_{n+1}^{s-1}
\end{equation}
be the map induced by identifying $x_s$ with $x_1$ in $R_{n,s}$ and thereby creating an extra loop. We can now see that $\phi_n^s=\alpha_n^{s+1}\circ\mu_n^s$.  Using our descriptions \eqref{AnsGen3} and \eqref{AnsGen4} of the generators of $A_n^s$, we get the following explicit description of $\alpha_n^s$:

\begin{proposition}
    For  $(\psi,y_1,\ldots,y_s)\in A_n^s$, we have
    $$\alpha_n^s(\psi,y_1,\ldots,y_s)=(L_{y_s,x_{n+1}}\circ\phi_n^1(\psi),y_1,\ldots,y_{s-1}),$$
where $L_{y_s,x_{n+1}}\in\Aut(F_{n+1})$ is defined by $x_{n+1}\mapsto y_sx_{n+1}$ and acting trivially on the remaining generators.
\end{proposition}

\noindent The second stabilization map is now defined simply by composition, but in the opposite order to $\phi_n^s$:

\begin{definition}
    We define $\nu_n^s:=\mu_{n+1}^{s-1}\circ\alpha_n^s:A_n^s\to A_{n+1}^s$.
\end{definition}

\begin{remark}
    In terms of $3$-manifolds, the stabilization map $\nu_n^s$ can be defined as follows: consider a cylinder $S^2\times I$ with a marked 2-disk in each boundary component and glue these to two marked disks in the first and last boundary components of $M_n^s$. We can then once again extend a diffeomorphism by the identity and in the same way as for the first stabilization, this descends to a homomorphism $\Gamma_n^s\to\Gamma_{n+1}^s$.
\end{remark}

\noindent Hatcher, Vogtmann and Wahl proved that for $n$ sufficiently large, both $\phi_n^s$ and $\nu_n^s$ induce isomorphisms on (co)homology with $\bZ$-coefficients (see \cite[Theorem 4]{HatcherVogtmannOut} and \cite[Theorem]{HatcherVogtmannWahlErratum}, respectively). Since $\phi_n^s=\alpha_n^{s+1}\circ\mu_n^s$, it follows that for $n$ sufficiently large in comparison to ${*}$,  $(\mu_n^s)^*:H^*(A_n^{s+1},\bZ)\to H^*(A_n^s,\bZ)$ is surjective (and that $(\alpha_n^{s+1})^*$ is injective) . The corresponding statement for $\nu_n^s$ shows that $(\mu_n^s)^*$ is injective (and that $(\alpha_n^s)$ is surjective). Theorem \ref{thm:Hatcher-Vogtmann-Wahl} thus follows as a corollary.

The stable cohomology of $A_n^1=\Aut(F_n)$ with $\bQ$-coefficients, with respect to the stabilization $\phi_n^1$, was computed by Galatius in \cite{Galatius11} and combining this result with Theorem \ref{thm:Hatcher-Vogtmann-Wahl} allowed him to compute the stable cohomology of $A_n^s$ with $\bQ$-coefficients, with respect to $\phi_n^s$, for any $s\ge 1$. 

In our case, the stable cohomology of $\Aut(F_n)$ with the coefficients $H(n)^{\otimes p}$, for any $p\ge 0$, are known from the results of Djament-Vespa and Randal-Williams, so we use the argument of Hatcher, Vogtmann and Wahl, but with twisted coefficients.

\subsubsection{Twisted coefficients}\label{subsubsec:twistedstabilizationmaps} For $(X,Y)$ a pair of topological spaces, let $H_*(X,Y;\bZ)$ denote the relative homology of $(X,Y)$. We define our twisted coefficients as follows:

\begin{definition}
    For $s\ge 1$, let $H_\bZ(n,s):=H_1(R_{n,s},\{x_1,\ldots,x_s\};\bZ)$ and $H(n,s):=H_\bZ(n,s)\otimes\bQ$.
\end{definition} 

\noindent Note that we have 
    $$H_1(R_{n,s},\{x_1,\ldots,x_s\};\bZ)\cong H_1(R_{n,s}/\{x_1,\ldots,x_s\};\bZ)\cong H_1(R_{n+s-1,1};\bZ).$$
    In terms of group homology, we thus have that $H_\bZ(n,s)\cong H_1(F_{n+s-1};\bZ)$ as an $\bZ[A_n^s]$-module, where for $s\ge 2$, the $A_n^s$-action on the right hand side is defined by the composition 
    $$\alpha_{n+s-2}^2\circ\cdots\circ\alpha_n^s:A_n^s\to\Aut(F_{n+s-1}).$$
We can also define maps between these representations corresponding to the stabilization homomorphisms above. We have a map $R_{n,s+1}/\{x_1,\ldots,x_{s+1}\}\to R_{n,s}/\{x_1,\ldots,x_s\}$, given by mapping the loop coming from the last leg to $x_1$. It induces an $A_n^s$-equivariant map
$$m_n^{s+1}:H_\bZ(n,s+1)\to H_\bZ(n,s),$$
where the $A_n^s$-action on the source is defined via the homomorphism $\mu_n^s:A_n^s\to A_n^{s+1}$. 

Similarly, for $s\ge 2$, there is a map $R_{n+1,s-1}/\{x_1,\ldots,x_{s-1}\}\to R_{n,s}/\{x_1,\ldots,x_s\}$ given by mapping the $n$th loop to the loop corresponding to the $s$th leg. This induces an $A_n^s$-equivariant map
$$a_{n+1}^{s-1}:H_\bZ(n+1,s-1)\to H_\bZ(n,s),$$
where the $A_n^s$-action on the source is defined via the homomorphism $\alpha_n^s:A_n^s\to A_{n+1}^{s-1}$.

We will prove the following theorem:

\begin{theorem}\label{thm:twostabilizations}
    For each $p\ge 0$, there is some $N(p)\in\bZ$ such that whenever $n\ge 2{*}+2p+3$ and $n\ge N(p)$, the maps
    \begin{align*}
        \left(\phi_n^s, (m_n^{s+1}\circ a_{n+1}^s)^{\otimes p}\right)^*:  H^*\left(A_{n+1}^s;H_\bZ(n+1,s)^{\otimes p}\right)&\to H^*\left(A_n^s;H_\bZ(n,s)^{\otimes p}\right)\end{align*}
        and
        \begin{align*}
        \left(\nu_n^s, (a_{n+1}^{s-1}\circ m_{n+1}^{s})^{\otimes p}\right)^*:H^*\left(A_{n+1}^s;H_\bZ(n+1,s)^{\otimes p}\right)&\to H^*\left(A_n^s;H_\bZ(n,s)^{\otimes p}\right)
    \end{align*}
    are both isomorphisms.
\end{theorem}

\begin{remark}
The stable range in Theorem \ref{thm:twostabilizations} produces a stable range for our main result which is worse than the one stated in Theorem \ref{thm:A}. Since we know that there is a better range, as explained above, the specific range in this theorem is only included for completeness and its consequences can be safely ignored in the proofs going forward.

The value of $N(p)$ in this theorem is due to a technical detail (see Corollary \ref{cor:tensorpowercoefficients} below) and is likely not optimal. It has an explicit formula in terms of $p$, but as it is not really relevant going forward we will not consider it further. 
\end{remark}

\noindent The coefficient systems in Theorem \ref{thm:twostabilizations} are examples of so-called \textit{finite degree} coefficient systems. We will prove the theorem using standard methods based on the framework for homological stability which was introduced by Randal-Williams and Wahl \cite{WahlRW} and extended by Krannich \cite{KrannichStability}. The setup and proof requires a few pages however, so we leave this to the final Section \ref{sec:stability}. For the rest of this section and Section \ref{sec:proofofthmA}, we will thus take Theorem \ref{thm:twostabilizations} as given.

By the same line of reasoning as above, we get the following as an immediate corollary:

\begin{corollary}
    For each $p\ge 0$, there is some $N(p)\in\bZ$ such that for each $s\ge 1$, whenever $n\ge 2{*}+2p+3$ and $n\ge N(p)$, the map
    $$(\mu_n^s,m_n^{s+1})^*:H^*\left(A_n^{s+1};H(n,s+1)^{\otimes p}\right)\to H^*\left(A_n^s; H(n,s)^{\otimes p}\right)$$
    is an isomorphism.
\end{corollary}

\noindent In the case $s=1$, the following result was obtained by Djament-Vespa and Randal-Williams:

\begin{theorem}[{\cite[Theorem A(ii)]{RW18}}]\label{PropRW}
For $n\ge 2{*}+p+3$ we have an isomorphism
\begin{align*}
    H^*\left(\Aut(F_n); H(n)^{\otimes p}\right)\cong\begin{cases}\cP_{p,0}\otimes\sgn_p&\text{ if }*=p,\\
    0&\text{ otherwise},\end{cases}
\end{align*}
of $\Sigma_p$-representations.
\end{theorem}

\noindent Combining Theorem \ref{thm:twostabilizations} with Theorem \ref{PropRW}, we get the following corollary:

\begin{corollary}\label{cor:stable-cohomology-1}
For each $p\ge 0$, there is some $N(p)\in\bZ$ such that for each $s\ge 1$, whenever $n\ge 2{*}+2p+3$ and $n\ge N(p)$, we have an isomorphism of $\Sigma_p$-representations
\begin{align*}
    H^*\left(A_n^s; H(n,s)^{\otimes p}\right)\cong\begin{cases}\cP_{p,0}\otimes\sgn_p&\text{ if }*=p,\\
    0&\text{ otherwise}.\end{cases}
\end{align*}
\end{corollary}

\noindent This corollary is the input for the proof of Theorem \ref{thm:A}, which we prove in the next section.

\section{Stable cohomology with bivariant twisted coefficients}\label{sec:proofofthmA}

\noindent In this section, we assume $n$ is sufficiently large and do not worry about specific stability ranges.

\subsection{Idea of proof} Consider the $\Aut(F_n)$-representation $H(n,1)$. For $q\ge 0$, we also consider it as a $A_n^{q+1}$-representation via the projection $A_n^{q+1}\to\Aut(F_n)$. The idea of the proof of the main theorem now has three main steps:\begin{enumerate}
    \item First, we show that as a $A_n^{q+1}$-representation, $H(n,q+1)$ is an extension of $H(n,1)$ by a trivial representation. We then apply Lemma \ref{lemma:extension}, together with Corollary \ref{cor:stable-cohomology-1} to compute the stable cohomology of $A_n^{q+1}$ with the coefficients $H(n,1)^{\otimes p}$, for any $p\ge 0$.
    \item\label{step2} Next, we consider the projection Hochschild-Serre spectral sequence associated to the short exact sequence
    $$1\to F_n^{\times q}\to A_n^{q+1}\to\Aut(F_n)\to 1,$$
    with the coefficients $H(n,1)^{\otimes p}$. We show that it degenerates on the $E_2$-page, which enables us to compute the stable cohomology of $\Aut(F_n)$ with coefficients in $\bigoplus_{i=0}^q H^i(F_n^{\times q})\otimes H(n,1)^{\otimes p}$.
    \item Finally we use the result from Step \eqref{step2} to inductively compute the desired stable cohomology groups.
\end{enumerate}

\subsection{Step 1: An extension of representations} We start by proving the following proposition:

\begin{proposition}
    We have a short exact sequence of $A_n^{q+1}$-representations
    \begin{equation}\label{eq:extension}
    0\to H(n,1)\to H(n,q+1)\to \bQ^{q}\to 0
\end{equation}
where $H(n,1)$ is considered an $A_n^{q+1}$-representation via the projection $A_n^{q+1}\to A_n^1$, and $\bQ^q$ is a trivial representation.
\end{proposition}

\begin{proof}
    The inclusion $R_{n,1}\hookrightarrow R_{n,q+1}$ induces an $A_n^{q+1}$-equivariant map $H(n,1)\to H(n,q+1)$. Furthermore, the cokernel in $H(n,q+1)$ is spanned by the homology classes of the paths from $x_1$ to each $x_i$, for $2\le i\le s+1$. By our description of the generators \eqref{AnsGen1}-\eqref{AnsGen4} of $A_n^{q+1}$ above, we see that the $A_n^{q+1}$-action on these classes is trivial modulo $H(n,1)$, which gives us the cokernel of the proposition
\end{proof}

\noindent We now apply Proposition \ref{prop:filtered-spectral-seq} to the short exact sequence \eqref{eq:extension}, which gives us, for any $p\ge 0$, a spectral sequence of the form
\begin{equation*}
    E_1^{k,l}=H^{k+l}\left(A_n^{q+1},\Ind_{\Sigma_{k}\times\Sigma_{p-k}}^{\Sigma_p}\left(H(n,q+1)^{\otimes k}\otimes(\bQ^q[1])^{\otimes(p-k)}\right)\right)\Rightarrow\  H^{k+l}\left(A_n^{q+1};H(n,1)^{\otimes p}\right)
\end{equation*}
We have
\begin{align*}
    E_1^{k,l}&=H^{k+l}\left(A_n^{q+1},\Ind_{\Sigma_{k}\times\Sigma_{p-k}}^{\Sigma_p}\left(H(n,q+1)^{\otimes k}\otimes(\bQ^q[1])^{\otimes(p-k)}\right)\right)\\
    &= H^{k+l}\left(A_n^{q+1}, \Ind_{\Sigma_k\times\Sigma_{p-k}}^{\Sigma_p}\left(H(n,q+1)^{\otimes k}\otimes(\bQ^q)^{\otimes(p-k)}\otimes\sgn_{p-k}\right)[p-k]\right) \\
    &= H^{l+2k-p}\left(A_n^{q+1}, \Ind_{\Sigma_k\times\Sigma_{p-k}}^{\Sigma_p}\left(H(n,q+1)^{\otimes k}\otimes(\bQ^q)^{\otimes(p-k)}\otimes\sgn_{p-k}\right)\right) \\
    &=\Ind_{\Sigma_k\times\Sigma_{p-k}}^{\Sigma_p}H^{l+2k-p}\left(A_n^{q+1}, H(n,q+1)^{\otimes k}\right) \otimes(\bQ^q)^{\otimes(p-k)}\otimes\sgn_{p-k}
\end{align*}
where we have used in the second step that as graded $\Sigma_{p-k}$-representations, we have 
$$(\bQ^q[1])^{\otimes(p-k)}\cong \left((\bQ^q)^{\otimes (p-k)}\otimes\sgn_{p-k}\right)[p-k],$$
by the Koszul sign rule, and in the last step that $A_n^{q+1}$ acts diagonally on the tensor powers and that $\bQ^q$ is a trivial $A_n^{q+1}$-representation. It follows by Corollary \ref{cor:stable-cohomology-1} that $E_1^{k,l}$ is only non-zero when $l+2k-p=k$, i.e\ when $k+l=p$, so the spectral sequence can have no non-zero differentials and thus it degenerates on the $E_1$-page. Thus, $H^*(A_n^{q+1},H(n,1)^{\otimes p})$ is concentrated in degree $p$ and in this degree, we get that
\begin{align*}
    H^p(A_n^{q+1},H(n,1)^{\otimes p})
    &\cong\bigoplus_{k=0}^p\Ind_{\Sigma_k\times\Sigma_{p-k}}^{\Sigma_p}\left( H^{k}\left(A_n^{q+1}, H(n,q+1)^{\otimes k}\right)\otimes(\bQ^q)^{\otimes(p-k)}\otimes\sgn_{p-k}\right)\\
    &\cong\bigoplus_{k=0}^p\Ind_{\Sigma_k\times\Sigma_{p-k}}^{\Sigma_p}\left((\cP_{k,0}\otimes\sgn_k)\otimes\overline{\cP}_{p-k,\le q}\otimes\sgn_{p-k}\right)\\
    &\cong \cP_{p,\le q}\otimes\sgn_p.
\end{align*}
where in the second step we have used Lemma \ref{lemma:partitions-tensor-prod} and Corollary \ref{cor:stable-cohomology-1} and in the final step we have used Lemma \ref{lemma:partitions-ind}\ref{partitions-ind2}.

In summary, we have the following lemma:

\begin{lemma}
For $n\gg *+p$, we have an isomorphism of $\Sigma_p\times\Sigma_q$-representations
\begin{align*}
    H^*\left(A_n^{q+1}; H(n,1)^{\otimes p}\right)\cong\begin{cases} \cP_{p,\le q}\otimes\sgn_{p}&\text{if }*=p,\\
    0&\text{if }*\neq p.\end{cases}
\end{align*}
\end{lemma}

\subsection{Step 2: Forgetting marked points}
Now consider the short exact sequence of groups
$$1\to F_n^{\times q}\to A_n^{q+1}\to\Aut(F_n)\to 1,$$
where the right hand map is the projection (topologically induced by forgetting the $q$ last marked points $x_2,\ldots,x_{q+1}$ of $R_{n,q+1}$) and the associated Hochschild-Serre spectral sequence with coefficients in the representation $H(n,1)^{\otimes p}$. As the $F_n^{\times q}$-action on this representation factors through $\Aut(F_n)$ via the short exact sequence, this is a trivial $F_n^{\times q}$-representation, so the spectral sequence has the form
\begin{equation}\label{ss:1}
\begin{gathered}
    E_{2}^{a,b}=H^a\left(\Aut(F_n), H^b(F_n^{\times q})\otimes H(n,1)^{\otimes p}\right)  \Rightarrow H^{a+b}\left(A_n^{q+1}, H(n,1)^{\otimes p}\right).
\end{gathered}
\end{equation}

\noindent Our next step is to show that this spectral sequence degenerates on the $E_2$-page, which is a special case of this more general lemma, which in fact holds over the integers:

\begin{lemma}\label{lem:degeneration-forgetting-marked-points}
    Let $M$ be a $\bZ[\Aut(F_n)]$-module. For any $s\ge 0$, the Hochschild-Serre spectral sequence with $M$-coefficients associated to the short exact sequence
    \begin{equation}\label{eq:ses-forgetting-marked-points-for-lemma}
        1\to F_n^{\times s}\to A_n^{s+1}\to\Aut(F_n)\to 1,
    \end{equation}
     degenerates on the $E_2$-page.
\end{lemma}

\begin{proof}
    For $s\ge 0$, let us denote the spectral sequence with $M$-coefficients associated to \eqref{eq:ses-forgetting-marked-points-for-lemma} by $E(s)$. We prove the lemma by induction on $s$. The case $s=0$ is trivial, so we suppose that for some $s\ge 0$ and each $0\le k<s$, $E(k)$ degenerates on the second page.

    As the $F_n^{\times q}$-action on $M$ is trivial, the terms of the page $E(s)_2$ have the form
    $$E(s)_2^{p,q}=H^p\left(\Aut(F_n),H^q\left(F_n^{\times s},\bZ\right)\otimes M\right),$$
    and thus this page is concentrated on the rows $0,\ldots, s$. We will show that the spectral sequence is a direct sum of spectral sequences concentrated on each of these rows.
    
    For $0\le k< s$, let $\Inj_{\le}(k,s)$ denote the set of order preserving injections from $[k]$ to $[s]$. For any $i\in \Inj_{\le}(k,s)$, we define an $\Aut(F_n)$-equivariant group homomorphism $\iota_i:F_n^{\times k}\to F_n^{\times s}$ (where $\Aut(F_n)$ acts diagonally on source and target) by
    \begin{align*}
        \iota_i(y_1,\ldots,y_k)_j=\begin{cases}
            y_l&\text{ if }i(l)=j\\
            1&\text{ otherwise}.
        \end{cases}
    \end{align*}
    We also define an $\Aut(F_n)$-equivariant group homomorphism $p_i:F_n^{\times s}\to F_n^{\times k}$ by
    \begin{align*}
        p_{i}(y_1,\ldots,y_s)=
            (y_{i(1)},\ldots,y_{i(k)}).
    \end{align*}
    Then we have $p_i\circ\iota_i=\id_{F_n^{\times k}}$.

    Since $\iota_i$ and $p_i$ are $\Aut(F_n)$-equivariant, the product maps
    \begin{align*}
      \id\times\iota_i&:\Aut(F_n)\ltimes F_n^{\times k}\to \Aut(F_n)\ltimes F_n^{\times s}\\
     \id\times p_i&:\Aut(F_n)\ltimes F_n^{\times s}\to \Aut(F_n)\ltimes F_n^{\times k}
    \end{align*}
    are well-defined. For any $i\in \Inj_{\le}(k,s)$, we thus get a commutative diagram 
    \[\begin{tikzcd}
        1\arrow[r]& F_n^{\times k}\arrow[d,hook,"{\iota}_i"]\arrow[r]&A_n^{k+1}\arrow[d,hook,"\id\times \iota_i"]\arrow[r]&\Aut(F_n)\arrow[r]\arrow[d,-,double equal sign distance,double]&1\\
        1\arrow[r]& F_n^{\times s}\arrow[d,"p_i",two heads]\arrow[r]&A_n^{s+1}\arrow[d,"\id\times p_i", two heads]\arrow[r]&\Aut(F_n)\arrow[d,-,double equal sign distance,double]\arrow[r]&1\\
        1\arrow[r]& F_n^{\times k}\arrow[r]&A_n^{k+1}\arrow[r]&\Aut(F_n)\arrow[r]&1,
    \end{tikzcd}\]
    which defines a split monomorphism of short exact sequences. By the functoriality of the Hochschild-Serre spectral sequence with respect to morphisms of short exact sequences, we thus get an induced commutative diagram 
    \begin{equation}\label{eq:split-sseq}
        \begin{tikzcd}
        E(k)\arrow[r,"\iota_i"]\arrow[rd,"\id"]&E(s)\arrow[d,"p_i"]\\
        & E(k)
    \end{tikzcd}
    \end{equation}
    of maps of spectral sequences, where we denote the induced maps by the same symbols, for brevity.

    For $k<s$, let $\tilde{E}(k)$ denote the trivial spectral sequence defined by
    \begin{align*}
        \tilde{E}(k)_r^{p,q}:=\begin{cases}
            E(k)_2^{p,q}&\text{ if }q=k\\
            0&\text{ otherwise}.
        \end{cases}
    \end{align*}
    and having no nonzero differentials. By our inductive assumption that $E(k)$ degenerates on the second page, the spectral sequence $\tilde{E}(k)$ is a direct summand of $E(k)$, so there is a commutative diagram
    \begin{equation*}
        \begin{tikzcd}
        \tilde{E}(k)\arrow[r]\arrow[rd,"\id"]&E(k)\arrow[d]\\
        & \tilde{E}(k)
    \end{tikzcd}
    \end{equation*}
    of maps of spectral sequences. Composing with the diagram \eqref{eq:split-sseq}, we thus get a commutative diagram
    \begin{equation}
        \begin{tikzcd}
        \tilde{E}(k)\arrow[r,"\tilde{\iota}_i"]\arrow[rd,"\id"]&E(s)\arrow[d,"\tilde{p}_i"]\\
        & \tilde{E}(k)
    \end{tikzcd}
    \end{equation}
    of maps of spectral sequences.

    Iteratively using the Künneth formula and the fact that $F_n$ is free, so that its cohomology is a free $\bZ$-module in each degree, we get that for any $0\le k\le s$, we have an isomorphism
    $$H^k(F_n^{\times s},\bZ)\cong \bigoplus_{\substack{I\subset[s]\\|I|=k}} H^1(F_n,\bZ)^{\otimes I}$$
    of $\bZ[\Aut(F_n)]$-modules. Looking at the $k$th row of the second page of each spectral sequence, we thus have
    $$E(s)_2^{p,k}=H^p\left(\Aut(F_n);H^k\left(F_n^{\times s},\bZ\right)\otimes M\right)\cong \bigoplus_{\substack{I\subseteq[s]\\|I|=k}}H^p\left(\Aut(F_n);H^1(F_n,\bZ)^{\otimes I}\otimes M\right)$$
    and 
    $$\tilde{E}(k)_2^{p,k}=E(k)_2^{p,k}=H^p\left(\Aut(F_n);H^1\left(F_n,\bZ\right)^{\otimes k}\otimes M\right).$$    
    The map 
    $$({\tilde{\iota}}_i)_2^{p,k}:H^p\left(\Aut(F_n);H^1(F_n,\bZ)^{\otimes k}\otimes M\right)\to\bigoplus_{\substack{I\subseteq[s]\\|I|=k}} H^p\left(\Aut(F_n);H^1\left(F_n,\bZ\right)^{\otimes I}\otimes M\right)$$
    is the map landing in the summand of $\Image(i)\subseteq[s]$, induced by the map of coefficients 
    $$H^1(F_n,\bZ)^{\otimes k}\to H^1(F_n,\bZ)^{\otimes\Image(i)}$$
    defined by $i\in \Inj_{\le}(k,s)$. The map $\Inj_{\le}(k,s)\to \{I\subseteq[s]\mid|I|=k\}$, given by $i\mapsto\Image(i)$, is a bijection, so we have an isomorphism
    $$\bigoplus_{\iota\in\Inj_{\le}(k,s)}(\tilde{\iota})_2^{p,k}:\bigoplus_{\iota\in\Inj_{\le}(k,s)} \tilde{E}(k)_2^{p,k}\to E(s)_2^{p,k}.$$
    In total, we thus get a commutative diagram
    \begin{equation}
        \begin{tikzcd}
        \bigoplus_{\iota\in\Inj_{\le}(k,s)}\tilde{E}(k)\arrow[rr,"\bigoplus_{\iota\in\Inj_{\le}(k,s)}\tilde{\iota}_i"]\arrow[rrdd,"\id"]&&E(s)\arrow[dd,"\bigoplus_{\iota\in\Inj_{\le}(k,s)}\tilde{p}_i"]\\ \\
        & &\bigoplus_{\iota\in\Inj_{\le}(k,s)}\tilde{E}(k),
    \end{tikzcd}
    \end{equation}
    where each map is an isomorphism when restricted to the $k$th row of every page. Hence $E(s)$ is the direct sum of a spectral sequence concentrated on the $k$th row and one whose terms are all zero on this row. Since this holds for all $0\le k<s$, it follows that $E(s)$ is a direct sum of spectral sequences concentrated on distinct rows $0,\ldots,s-1$ as well as one which is zero below the $s$th row. However, since $E(s)$ is concentrated on the rows $0,\ldots,s$, it follows that the last summand is also concentrated on a single row. Thus, the spectral sequence must degenerate on $E(s)_2$.
\end{proof}

\noindent Combining the previous two lemmas, we get the following corollary:

\begin{corollary}\label{corollary:step2}
For $n\gg *+p+q$ and $0\le k\le q$ we have 
\begin{equation}\label{eq:vanishing}
    H^*\left(\Aut(F_n);H^k(F_n^{\times q})\otimes H(n,1)^{\otimes p}\right)=0
\end{equation}
if $*\neq p-k$ and
\begin{equation}\label{eq:nonzero-cohomology}
\begin{gathered}
   \bigoplus_{k=0}^pH^{p-k}\left(\Aut(F_n);H^k(F_n^{\times q})\otimes H(n,1)^{\otimes p}\right)  \cong \cP_{p,\le q}\otimes\sgn_p
\end{gathered}
\end{equation}
as $\Sigma_p\times\Sigma_q$-representations.
\end{corollary}

\subsection{Step 3: Proof of Theorem A}

Now let us finish the proof of the main theorem. 

\begin{proof}[Proof of Theorem A]
Since $H^q(F_n^{\times q})\cong H^*(n,1)^{\otimes q}$ as $\Aut(F_n)$-representations, the vanishing part of the theorem follows directly from the first part of Corollary \ref{corollary:step2}. 

Now let us assume that $q\le p$. From the second part of Corollary \ref{corollary:step2}, together with Lemma \ref{lemma:partitions-ind}\ref{partitions-ind1}, we get that 
\begin{equation}\label{eq:forinduction}
    \bigoplus_{k=0}^qH^{p-k}\left(\Aut(F_n);H^k(F_n^{\times q})\otimes H(n,1)^{\otimes p}\right)\cong \bigoplus_{k=0}^q\Ind_{\Sigma_k\times\Sigma_{q-k}}^{\Sigma_q}\cP_{p,k}\otimes\sgn_p.
\end{equation}
We will now use induction on $q$ to prove that
$$H^{p-q}\left(\Aut(F_n); H^{q}(F_n^{\times q})\otimes H(n,1)^{\otimes p}\right)\cong \cP_{p,q}\otimes\sgn_p,$$
as $\Sigma_p\times\Sigma_q$-representations. For $q=0$, this statement is just \cite[Theorem A(ii)]{RW18}, so assume that 
$$H^{p-k}(\Aut(F_n);H^k(F_n^{\times k})\otimes H(n,1)^{\otimes p})\cong\cP_{p,k}\otimes\sgn_p$$
as $\Sigma_{p}\times\Sigma_i$-representations, for all $k<q$. We have
$$H^k(F_n^{\times q})\cong \bigoplus_{\substack{I\subseteq[q]\\|I|=k}} H^k\left(F_n^{\times I}\right)$$
as $\Aut(F_n)$-representations by the Künneth theorem, and using Equation \eqref{eq:inductionaction} from above, we see that as $\Sigma_q$-representations, we have 
$$H^k(F_n^{\times q})\cong\Ind_{\Sigma_k\times\Sigma_{q-k}}^{\Sigma_q} H^k(F_n^{\times k}),$$
where we consider $H^k(F_n^{\times k})$ as a $\Sigma_k\times\Sigma_{q-k}$-representation via the projection $\Sigma_k\times\Sigma_{q-k}\to\Sigma_k$. This gives us an isomorphism
$$\bigoplus_{k=0}^{q-1} H^{p-k}\left(\Aut(F_n); H^k(F_n^{\times q})\otimes H(n,1)^{\otimes p}\right)\cong\bigoplus_{k=0}^{q-1}\Ind_{\Sigma_k\times\Sigma_{q-k}}^{\Sigma_q}\cP_{p,k}\otimes\sgn_p.$$
of $\Sigma_p\times \Sigma_q$-representations. Since $H^q(F_n^{\times q})\cong H^1(F_n)^{\otimes q}\otimes\sgn_q$ as $\Sigma_q$-representations by the Künneth theorem (where the sign representation appears because of the Koszul sign rule), the theorem now follows by combining this with Equation \eqref{eq:forinduction}.
\end{proof}

\section{Homological stability with finite degree coefficients}\label{sec:stability}

\noindent In this section we prove Theorem \ref{thm:twostabilizations}, using a general framework for homological stability that was introduced by Randal-Williams and Wahl in \cite{WahlRW} and generalized by Krannich in \cite{KrannichStability}. We will assume some familiarity with these tools, and only briefly summarize the necessary background, in the case where we start from a symmetric monoidal groupoid, since this is enough for our purposes and somewhat simplifies the background. Wahl \cite{WahlTool} has also written a survey on this framework, which may be useful to the unfamiliar reader. 

\subsection{A framework for homological stability}\label{sec:framework-homstab} Let $(\cG,\oplus,0)$ be a monoidal groupoid.

\subsubsection{Quillen's bracket construction} Let us recall the definition of the category $U\cG$ associated to $\cG$, constructed using Quillen's bracket construction (cf. \cite[Section 1.1]{WahlRW}):

\begin{definition}
    Let $(\cG,\oplus, 0)$ be a monoidal groupoid. The category $U\cG$ has the same objects as $\cG$, but a morphism $A\to B$ in $U\cG$ is an equivalence class of pairs $(C,f)$, where $C\in\cG$ and $f: C\oplus A\to B$ is a morphism of $\cG$, with two such pairs $(C,f)$ and $(C',f')$ are considered equivalent whenever there is a morphism $g\in\Hom_{\cG}(C,C')$ such that the diagram
    \[\begin{tikzcd}
        C\oplus A\arrow[r,"f"]\arrow[d," g\oplus\id_A"']&B\\
        C'\oplus A\arrow[ru,"f'"']
    \end{tikzcd}\]
    is commutative. We write $[C,f]$ for the equivalence class of the pair $(C,f)$.
\end{definition}

\begin{proposition}[{\cite[Proposition 1.8]{WahlRW}}]\label{prop:mononidal-structure-on-UG}
    If $(\cG,\oplus,0)$ is a symmetric monoidal groupoid, then $U\cG$ has a symmetric monoidal structure, with the monoidal product defined the same way as that of $\cG$ on objects, and for $[X,f]\in\Hom_{U\cG}(A,B)$ and $[Y,g]\in\Hom_{U\cG}(C,D)$ defined by
    $$[X,f]\oplus[Y,g]=[X\oplus Y,(f\oplus g)\circ (\id_X\oplus s_{A,Y}^{-1}\oplus\id_C)]\in\Hom_{U\cG}(A\oplus C,B\oplus D),$$
    where $s$ denotes the symmetry of $\cG$. The symmetry is given by $[0,s_{A,B}]:A\oplus B\to B\oplus A$, for any $A,B\in U\cG$. Furthermore, the monoidal unit $0$ is initial in $U\cG$.
\end{proposition}

\begin{notation}
 For any $X\in U\cG$, we write $\iota_X:=[X,\mathrm{id}_X]:0\to X$ for the unique morphism in $\Hom_{U\cG}(0,X)$. Furthermore, for $A,B\in U\cG$, by abuse we will write $s_{A,B}:=[0,s_{A,B}]$ for the symmetry, for simplicity.
\end{notation}

\noindent For $A,X\in\cG$, we define the group $G_n:=\Aut_{U\cG}\left(A\oplus X^{\oplus n}\right)$ and note that we have a stabilization map $-\oplus \id_X:G_n\to G_{n+1}$. We thus get a sequence of groups for which we can consider homological stability. For the main theorem from \cite{WahlRW} about homological stability for this sequence to hold, we will also need to assume that $\cG$ and the pair $(A,X)$ satisfies the following property:

\begin{definition}
    For $A,X\in\cG$, we say that $\cG$ satisfies local cancellation at $(A,X)$ if for all $0\le p<n$, if $Y\in\cG$ is such that $Y\oplus X^{\oplus p+1}\cong A\oplus X^{\oplus n}$, then $Y\cong A\oplus X^{\oplus n-p-1}$.
\end{definition}

\noindent Furthermore, we will need to assume that the stabilization maps satisfy the following:

\begin{definition}
    We say that $U\cG$ satisfies \textit{injectivity} at $(A,X)$ if for all $0\le p<n$, the stabilization map $-\oplus\id_{X^{\oplus p+1}}: G_{n-p-1}\to G_{n}$ is injective.
\end{definition}

\noindent We define our symmetric monoidal groupoids below so that the automorphism groups of their objects are the groups for which we want to prove homological stability, i.e. $G_n$. However, for $\cG$ a symmetric monoidal groupoid as above, it is not immediate that $G_n=\Aut_{U\cG}(A)=\Aut_{\cG}(A)$, for any $A\in \cG$, but this holds under quite mild assumptions:

\begin{proposition}[{\cite[Proposition 1.7]{WahlRW}}]\label{prop:underlying-groupoid}
    Let $(\cG,\oplus, 0)$ be a monoidal category such that $\Aut_{\cG}(0)=\{\id_0\}$ and which has no zero-divisors, i.e.\ if $A\oplus B\cong 0$ in $\cG$, then $A\cong B\cong 0$. Then $\cG$ is the underlying groupoid of $U\cG$.
\end{proposition}

\subsubsection{Coefficient systems} Now let us recall the notion of coefficient systems. Let $\cC_{A,X}$ be the full subcategory of $U\cG$ with objects of the form $A\oplus X^{\oplus n}$, for $n\ge 0$. We call a functor $F:\cC_{A,X}\to\Ab$, where $\Ab$ is the category of abelian groups, a \textit{coefficient system} on $\cC_{A,X}$. 

In order to define stabilization maps of coefficients, we recall from \cite[Section 4.1]{WahlRW} the definition of the \textit{upper suspension} functor.

\begin{definition}
    We define the upper suspension functor $\Sigma^X:\cC_{A,X}\to\cC_{A,X}$, by $\Sigma^X(B)=B\oplus X$ and 
    $$\Sigma^X(f:B\to B')=f\oplus\id_X:B\oplus X\to B'\oplus X.$$
    For $B\in \cC_{A,X}$, we further define the \textit{upper suspension map}
    $$\sigma^X_B:=\id_{B}\oplus\iota_X:B\to B\oplus X.$$
    This defines the components of a natural transformation $\sigma^X:\id\to\Sigma^X$. For brevity, when $B=A\oplus X^{\oplus n}$ we will write $\sigma_n^X:=\sigma_B^X$.
\end{definition}

\noindent Given a coefficient system $F:\cC_{A,X}\to\Ab$, we define a $G_n$-module $F_n:=F\left(A\oplus X^{\oplus n}\right)$. Furthermore, the suspension map $\sigma^X_{n}:A\oplus X^{\oplus n}\to A\oplus X^{\oplus n+1}$ induces a $G_n$-equivariant map $F_n\to F_{n+1}$. We thus get an induced sequence in homology
\begin{equation}\label{eq:coefficient-stabilization}
    \left(-\oplus\id_X,F(\sigma_n^X)\right)_*:H_*(G_n,F_n)\to H_*(G_{n+1},F_{n+1}),
\end{equation}
for which we can consider homological stability. 

In the cases we consider below, we already have groups $G_n$ and coefficients $F_n$ together with stabilization maps, and will use the following proposition to construct coefficient systems from this data:

\begin{proposition}\label{prop:extendcoefficientstofunctor}
    Let $(\cG,\oplus,0)$ be a symmetric monoidal groupoid with no zero divisors and such that $\Aut_{\cG}(0)=\{\id_0\}$. Let $(A,X)$ be a pair of objects in $\cG$ such that\begin{enumerate}[(i)]
        \item\label{cond-i:extend-coeff-system} in $U\cG$ we have $A\oplus X^{\oplus n}\not\cong A\oplus X^{\oplus m}$ when $n\neq m$,
        \item\label{cond-ii:extend-coeff-system} $\cG$ satisfies local cancellation at $(A,X)$ and
        \item\label{cond-iii:extend-coeff-system} $U\cG$ satisfies injectivity at $(A,X)$. 
    \end{enumerate}
    Let $G_n=\Aut_{U\cG}(A\oplus X^{\oplus n})$ and suppose that for $n\ge 0$, $F_n$ is a sequence of $G_n$-modules, and $\sigma_n:F_n\to F_{n+1}$ is a sequence of $G_n$-equivariant maps such that 
    \begin{enumerate}[(i)]
      \setcounter{enumi}{3}
        \item\label{cond-iv:extend-coeff-system} $\Aut_{U\cG}(X^{\oplus m})$ acts trivially on the image of $F_n$ in $F_{n+m}$. 
    \end{enumerate}
    Then there exists a functor
    $$F:\cC_{A,X}\to\Ab$$
    satisfying that $F(A\oplus X^{\oplus n})=F_n$ that $\sigma_n=F(\sigma_n^X)$. For $f:A\oplus X^{\oplus n}\to A\oplus X^{\oplus m}$, with $n\le m$, $F(f)$ is given by writing $f$ as
    $$f=\phi\circ(\id_{A\oplus X^{\oplus n}}\oplus \iota_{X^{\oplus m-n}}),$$
    for $\phi\in G_m=\Aut_{U\cG}(A\oplus X^{\oplus m})$.
\end{proposition}

\begin{proof}
  By \cite[Theorem 1.10]{WahlRW}, the assumptions that $\cG$ has no zero-divisors and that local cancellation and injectivity hold at $(A,X)$ imply that $\cC_{A,X}$ is a \textit{pre-braided homogeneous category}. Now the proposition follows from \cite[Proposition 4.2]{WahlRW}.
\end{proof}

\noindent Homological stability is not enjoyed by arbitrary coefficient systems, so we will limit ourselves to considering coefficient systems with an additional property, namely having \textit{finite degree}. In order to define this property, we first recall from \cite[Section 4.2]{WahlRW} the definition of the \textit{lower} suspension functor. For $A,B$ in $U\cG$, recall that we write $s_{A,B}:A\oplus B\to B\oplus A$ for the symmetry.

\begin{definition}
We define the lower suspension functor $\Sigma_X:\cC_{A,X}\to\cC_{A,X}$ by $\Sigma_X(B)=B\oplus X$ and for $f:A\oplus X^{\oplus n}\to A\oplus X^{\oplus k}$, $\Sigma_X(f)$ is the composition
 \[\begin{tikzcd}
     A\oplus X^{\oplus n+1}\arrow[rr,"s_{A,X}\oplus \id"]&&X\oplus A\oplus X^{\oplus n}\arrow[r,"\id\oplus f"]&X\oplus A\oplus X^{\oplus k}\arrow[rr,"s_{X,A}\oplus\id"]&&A\oplus X^{\oplus k+1}.
 \end{tikzcd}\]
  We also define the \textit{lower suspension maps}
 $$\sigma_X^n:=(s_{X,A}\oplus X^{\oplus n})\circ(\iota_X\oplus\id):A\oplus X^{\oplus n}\to A\oplus X\oplus X^{\oplus n},$$
 which define a natural transformation $\sigma_X:\id\to\Sigma_X$. 
\end{definition}

 \begin{remark}\label{remark:upperlowersusp}
     It is explained in \cite[Section 4.2]{WahlRW} how the upper and lower suspensions are related via the symmetric monoidal structure. In particular, we have
     $$\sigma_X^n=(s_{X,A}\oplus \id_{X^{\oplus n}})\circ s_{A\oplus X^{\oplus n},X}\circ\sigma^X_n.$$
 \end{remark}

\begin{definition}
    If $F:\cC_{A,X}\to\Ab$ is a coefficient system, we define the suspension of $F$ as the composite functor
 $$\tau_X(F):=F\circ\Sigma_X.$$
Furthermore, we have a natural transformation $i_X(F):=F(\sigma_X):F\to\tau_X(F)$ and we define $\kappa_X(F):=\ker(F)$ and $\delta_X(F):=\coker(F)$ as the kernel and cokernel, respectively, of this natural transformation. Furthermore, for each $r\ge 1$ we write $\delta_X^r(F)$ for the iterated application of $\delta_X$. We also use the convention that $\delta_X^0(F)=F$.
\end{definition}  

\noindent With these definitions, we can now define the notion of a finite degree coefficient system:

 \begin{definition}[{\cite[Definition 4.10]{WahlRW}}]
     A coefficient system $F:\cC_{A,X}\to\Ab$ is of degree $r<0$ at $N\in\bZ$ if $F(A\oplus X^{\oplus n})=0$ for all $n\ge N$. For $r\ge 0$, we define inductively that $F$ has degree $r$ at $N\in\bZ$ if
    \begin{enumerate}[(i)]
         \item\label{cond1:degree} $\kappa_X(F)$ has degree $-1$ at $N$,
         \item\label{cond2:degree} $\delta_X(F)$ has degree $r-1$ at $N-1$.
     \end{enumerate}
 \end{definition}

\noindent The coefficient systems we consider below are given by composing a coefficient system of degree 1 with the functor $(-)^{\otimes p}:\Ab\to\Ab$ and we therefore need to understand how degrees of coefficient systems behave under taking tensor products. A proposition explaining this appears in \cite{ExtCorrigendumSoulié}, but we include its proof here for completeness. Before stating the proposition, we introduce the following assumptions, which are used in the proofs of Proposition \ref{prop:tensorproductcoefficients} and its corollary, respectively:

\begin{assumption}\label{ass:subcategory-free-modules}
Let $F:\cC_{A,X}\to \Ab$ be a coefficient system. For each $r\geq 0$, the functor $\delta^{r}_{X}(F) \colon \cC_{A,X} \to \Ab$\begin{enumerate}[(a)]
    \item\label{ass:subcategory-free-modules(a)} factors through the subcategory of free abelian groups,
    \item\label{ass:subcategory-free-modules(b)} factors through the subcategory of finitely generated free abelian groups.
\end{enumerate} 
If there is some $N\ge 0$ such that either of the assumptions hold only for the values $F(A\oplus X^{\oplus n})$ for $n\ge N$, we say that the assumption is satisfied at $N$.
\end{assumption}

\noindent We can now slightly restate \cite[Proposition 0.8]{ExtCorrigendumSoulié} as follows:

\begin{proposition}\label{prop:tensorproductcoefficients}
    Let $F,G:\cC_{A,X}\to \Ab$ be coefficient systems of finite degrees $d\geq0$ at $N$ and $d'\geq0$ at $N'$ respectively and satisfying Assumption~\ref{ass:subcategory-free-modules}\eqref{ass:subcategory-free-modules(a)} at $N$ and $N'$ respectively. We denote by $M$ the maximum $\mathrm{max}(N,N')$. Then the functor $F\otimes G$ is a coefficient system of finite degree less or equal to $d+d'$ at $M+\frac{(d+d'+1)(d+d'+2)}{2}$.
\end{proposition}

\begin{remark}
    The value $M+\frac{(d+d'+1)(d+d'+2)}{2}$ of Proposition \ref{prop:tensorproductcoefficients} is likely not optimal (indeed, the tensor product of two systems of finite degree at zero ought to be of finite degree at zero), but is rather an artefact of the specific proof. Since we do not use the explicit range of stability obtained in the results of this section, we will typically omit this specific value going forward, and rather use that if $F$ is of finite degree at integer $N$, then $F^{\otimes p}$ is of finite degree $r$ at \textit{some} $N'$, depending on $p,r$ and $N$.
\end{remark}

\noindent To prove Proposition \ref{prop:tensorproductcoefficients}, we need the following lemma.

\begin{lemma}\label{lemma:exact-seq-coefficient-systems}
    Let $F,G:\cC_{A,X}\to \Ab$ be coefficient systems satisfying Assumption \ref{ass:subcategory-free-modules}\eqref{ass:subcategory-free-modules(a)}. There is an exact sequence in the functor category $\Fun(\cC_{A,X},\Ab)$:
\begin{equation}\label{eq:exact-sequence-kappa-delta-tensor-free}
F\otimes \kappa_{X}(G) \hookrightarrow \kappa_{X}(F\otimes G) \to \kappa_{X}(F)\otimes \tau_{X}(G) \to F\otimes \delta_{X}(G) \to \delta_{X}(F\otimes G) \twoheadrightarrow \delta_{X}(F)\otimes \tau_{X}(G).
\end{equation}
\end{lemma}

\begin{proof}
We follow the proofs of \cite[Lemma 0.1, Corollary 0.5]{ExtCorrigendumSoulié}. By definition of $i_X$, we have $i_X(F\otimes G)=i_X(F)\otimes i_X(G)$, so it factors as
$$(i_{X}(F)\otimes \id_{\tau_{X}(G)})\circ (\id_{F}\otimes i_{X}(G))\colon F\otimes G\to F\otimes \tau_{X}(G)  \to \tau_{X}(F\otimes G).$$
This gives us the commutative diagrams of functors
\begin{equation}\label{diag:composition-tensor-cokernel}
\begin{tikzcd}
F\otimes G \arrow[rrr,"\id_{F}\otimes i_{X}(G)"]\arrow[dd,"i_{X}(F\otimes G)"]
&&& F\otimes \tau_X(G) \arrow[dd,"i_{X}(F)\otimes \id_{\tau_{X}(G)}"]\arrow[rrr, two heads]
&&&  \coker(\id_{F}\otimes i_{X}(G)) \arrow[dd]
\\\\
\tau_X(F\otimes G)\arrow[rrr,equal]
&&&\tau_{X}(F\otimes G)\arrow[rrr]
&&& 0
\end{tikzcd}
\end{equation}
and
\begin{equation}\label{diag:composition-tensor-kernel}
\begin{tikzcd}
0 \arrow[rrr,hook] \arrow[dd,hook]
&&& F\otimes G \arrow[rrr,equal]\arrow[dd,"\id_{F}\otimes i_{X}(G)"]
&&& F\otimes G \arrow[dd,"i_{X}(F\otimes G)"]
\\\\
\ker(i_{X}(F)\otimes \id_{\tau_{X}(G)}) \arrow[rrr,hook]
&&& F\otimes \tau_{X}(G)\arrow[rrr,"i_{X}(F)\otimes \id_{\tau_{X}(G)}"]
&&&\tau_{X}(F\otimes G),
\end{tikzcd}
\end{equation}
both of which have exact rows. We have $\coker(\id_{F}\otimes i_{X}(G))\cong F\otimes \delta_{X}(G)$ and $\coker(i_{X}(F)\otimes \id_{\tau_{X}(G)})\cong \delta_{X}(F)\otimes \tau_{X}(G)$ by the right-exactness of the tensor product. Since $F$ and $\tau_{X}G$ take values in free (and thus flat) abelian groups, by assumption, it follows that $\ker(\id_{F}\otimes i_{X}(G))\cong F\otimes \kappa_{X}(G)$ and $\ker(i_{X}(F)\otimes \id_{\tau_{X}(G)}) \cong \kappa_{X}(F)\otimes \tau_{X}(G)$. Applying the snake lemma to \eqref{diag:composition-tensor-cokernel} and \eqref{diag:composition-tensor-kernel}, we obtain exact sequences
\begin{equation}\label{eq:snake-composition-tensor-cokernel}
\kappa_X(F\otimes G) \to
  \ker(i_X(F)\otimes \id_{\tau_X(G)})\to
F\otimes\delta_X(G)\to\delta_{X}(F\otimes G)\twoheadrightarrow
\delta_{X}(F)\otimes \tau_{X}(G)
\end{equation}
and
\begin{equation}\label{eq:snake-composition-tensor-kernel}
\ker(\id_F\otimes i_X(G))\hookrightarrow\kappa_X(F\otimes G)\to
\ker(i_X(F)\otimes \id_{\tau_X(G)}) \to F\otimes\delta_X(G)\to
\delta_{X}(F\otimes G).
\end{equation}
Combining \eqref{eq:snake-composition-tensor-cokernel} and \eqref{eq:snake-composition-tensor-kernel} gives us the desired exact sequence.
\end{proof}

\begin{proof}[Proof of Proposition \ref{prop:tensorproductcoefficients}]
    We follow the proof of \cite[Proposition 0.8]{ExtCorrigendumSoulié}. For brevity, whenever $F:\cC_{A,X}\to\Ab$ is a coefficient system, we will write $F(n):=F(A\oplus X^{\oplus n})$.
    
    The proof is done by strong induction on the sum of the degrees $d+d'\geq 0$. If $d+d'=0$, then it follows from the exact sequence \eqref{eq:exact-sequence-kappa-delta-tensor-free} of Lemma~\ref{lemma:exact-seq-coefficient-systems} that $\kappa_{X}(F\otimes G)(n)=0$ and $\delta_{X}(F\otimes G)(n+1)=0$ for all $n\geq M$, so $F\otimes G$ is a coefficient system of degree $0$ at $M+1$.

    Assume now that the claim holds for all $d,d'$ with $0 \leq d,d'\leq D-1$ for some $D\geq 1$, and now consider the case of $d+d'=D$. To show that $F\otimes G$ is of degree at most $d+d'$ at $M+\frac{(D+1)(D+2)}{2}$ amounts to showing that for each $0\le r\le D$ and $n\ge M+\frac{(D+1)(D+2)}{2}-r$ we have $\kappa_X(\delta_X^r(F\otimes G))(n)=0$ and that  for $n\ge M+\frac{(D+1)(D+2)}{2}-D-1$ we have $\delta^{D+1}(F\otimes G)(n)=0$.
    
     It is immediate from the definition that $\tau_X$ preserves degrees of coefficient systems, so $\tau_X(G)$ is a coefficient system of degree $d'$ at $N'$. It thus follows from the inductive assumption and the exact sequence \eqref{eq:exact-sequence-kappa-delta-tensor-free} of Lemma \ref{lemma:exact-seq-coefficient-systems} that $\kappa_X(F\otimes G)(n)=0$ for all $n\ge M$. Furthermore, for $n\ge M$ we have a short exact sequence
     \begin{equation}\label{eq:ses-difference-functors}
        \begin{tikzcd}
            F\otimes \delta_{X}(G)(n) \arrow[r,hook]&
\delta_{X}(F\otimes G)(n)  \arrow[r, two heads]
& (\delta_X(F)\otimes\tau_X(G))(n)  
        \end{tikzcd}
    \end{equation}
    of $\bZ[G_n]$-modules. By exactness of the translation functor $\tau_X$, applying the snake lemma gives us, for each $n\ge M$, an exact sequence
    \begin{equation}\label{eq:snake-applied-to-suspension}
        \begin{tikzcd}
            \kappa_X(F\otimes\delta_X(G))(n) \arrow[r,hook]
& \kappa_{X}(\delta_X(F\otimes G))(n) \arrow[r]
\arrow[d, phantom, ""{coordinate, name=Z}]
& \kappa_X(\delta_X(F)\otimes\tau_X(G))(n) \arrow[dll,
rounded corners,
to path={ -- ([xshift=2ex]\tikztostart.east)
|- (Z) [near end]\tikztonodes
-| ([xshift=-2ex]\tikztotarget.west)
-- (\tikztotarget)}] \\
\delta_X(F\otimes \delta_{X}(G))(n) \arrow[r]&
\delta_{X}^2(F\otimes G)(n)  \arrow[r, two heads]
& \delta_{X}(\delta_X(F)\otimes\tau_X(G))(n)        
    \end{tikzcd}
    \end{equation}
    of $\bZ[G_n]$-modules. By assumption, $\delta_X(F)$ and $\delta_X(G)$ are, respectively, of degrees $d-1$ at $N-1$ and $d'-1$ at $N'-1$, so by Assumption \ref{ass:subcategory-free-modules}\eqref{ass:subcategory-free-modules(a)} and the inductive assumption it follows that both $F\otimes\delta_X(G)$ and $\delta_X(F)\otimes\tau_X(G)$ are of degree at most $d+d'-1=D-1$ at $M+\frac{D(D+1)}{2}$. We thus deduce from the exact sequence \eqref{eq:snake-applied-to-suspension} that $\kappa_X(\delta_X(F\otimes G))(n)=0$ for $n\ge M+\frac{D(D+1)}{2}$ (in particular, for $n\ge M+\frac{(D+1)(D+2)}{2}-1)$ and that for such $n$ we have a short exact sequence
    \begin{equation}\label{eq:ses-from-snake-applied-to-suspension}
        \begin{tikzcd}
            \delta_X(F\otimes \delta_{X}(G))(n) \arrow[r,hook]&
\delta_{X}^2(F\otimes G)(n)  \arrow[r, two heads]
& \delta_{X}(\delta_X(F)\otimes\tau_X(G))(n)  
        \end{tikzcd}
    \end{equation}
    of $\bZ[G_n]$-modules. We can now apply the same argument that we applied to the short exact sequence \eqref{eq:ses-difference-functors} to the short exact sequence \eqref{eq:ses-from-snake-applied-to-suspension}. By induction on $2\le r\le D$, it follows that $\kappa_X(\delta_X^r(F\otimes G))(n)=0$ for $n\ge M+\frac{D(D+1)}{2}$ (in particular, for $n\ge M+\frac{(D+1)(D+2)}{2}-r$) and for such $n$, we finally we end up with a short exact sequence
    \begin{equation}\label{eq:final-ses-from-snake-applied-to-suspension}
        \begin{tikzcd}
            \delta_X^D(F\otimes \delta_{X}(G))(n) \arrow[r,hook]&
\delta_{X}^{D+1}(F\otimes G)(n)  \arrow[r, two heads]
& \delta_{X}^D(\delta_X(F)\otimes\tau_X(G))(n)  
        \end{tikzcd}
    \end{equation}
    of $\bZ[G_n]$-modules. By the inductive assumption, we have that both $\delta_X^D(F\otimes \delta_{X}(G))(n)=0$ and $ \delta_{X}^D(\delta_X(F)\otimes\tau_X(G))(n)=0$ for $n\ge M+\frac{D(D+1)}{2}-D$, so we conclude that $\delta_X^{D+1}(F\otimes G)(n)=0$ for all $n\ge M+\frac{D(D+1)}{2}=M+\frac{(D+1)(D+2)}{2}-D-1$. 
\end{proof}

\noindent As we want to apply Proposition \ref{prop:tensorproductcoefficients} to tensor powers of coefficient systems, we deduce the following corollary:

\begin{corollary}[{\cite[Corollary 0.9]{ExtCorrigendumSoulié}}]\label{cor:tensorpowercoefficients}
    Suppose that $F:\cC_{A,X}\to\Ab$ is a coefficient system of degree $r$ at $N\in\bZ$, satisfying Assumption \ref{ass:subcategory-free-modules}\eqref{ass:subcategory-free-modules(b)} at $N$. For any $p\ge 0$, the coefficient system $F^{\otimes p}:\cC_{A,X}\to\Ab$ is a coefficient system of degree at most $p\cdot r$ at $N+\frac{1}{2}\sum_{k=2}^p(kr+1)(kr+2)$.
\end{corollary}

\begin{proof}
    We follow the proof of \cite[Corollary 0.9]{ExtCorrigendumSoulié}, which is by strong induction on $p$. For $p=2$, the statement follows immediately from Proposition \ref{prop:tensorproductcoefficients}. Now assume the result holds for all $2\le p\le P$ for some $P\ge 2$. As the category of finitely generated free abelian groups is closed under extensions, it follows from the short exact sequences \eqref{eq:snake-applied-to-suspension}, \eqref{eq:ses-from-snake-applied-to-suspension} and so on until \eqref{eq:final-ses-from-snake-applied-to-suspension}, with $G=F$, that for each $r\ge 0$ and $n\ge N$, $\delta_X^r(F^{\otimes p})(n)$ is a finitely generated free abelian group. Iteratively applying this argument shows that $F^{\otimes P}$ satisfies Assumption \ref{ass:subcategory-free-modules}\eqref{ass:subcategory-free-modules(b)} at $N$, and so the corollary follows from Proposition \ref{prop:tensorproductcoefficients}.
\end{proof}

\subsubsection{The destabilization complex} For a coefficient system $F$ of finite degree, the range in which the map \eqref{eq:coefficient-stabilization} is an isomorphism is determined by the connectivity of (the geometric realization of) a certain semi-simplicial set, or in many cases (as those we will consider), by its underlying simplicial complex:

\begin{definition}
    Let $W_n(A,X)_\bullet$ be the semi-simplicial set whose set of $p$-simplices is defined by
$$W_n(A,X)_p:=\Hom_{U\cG}\left(X^{\oplus p+1},A\oplus X^{\oplus n}\right),$$
with the face map
$$d_i:W_n(A,X)_{p+1}\to W_n(A,X)_p$$
given by precomposition with $\id_{X^{\oplus i}}\oplus\iota_X\oplus\id_{X^{\oplus p-i}}$ (we refer the reader to \cite[Section 2]{WahlRW} for more details).

To the semi-simplicial set $W_n(A,X)_\bullet$ we also associate a simplical complex $S_n(A,X)$, whose set of vertices is given by $W_n(A,X)_0$, and where a $(p+1)$-tuple $([B_0,g_0],\ldots,[B_p,g_p])$ of vertices spans a $p$-simplex in $S_n(A,X)$ if and only if these are the vertices of a $p$-simplex in $W_n(A,X)_p$, i.e.\ if and only if there is $[B,g]\in\Hom_{U\cG}(X^{\oplus (p+1)},A\oplus X^{\oplus n})$ such that
$$[B_i,g_i]=[B,g]\circ (\iota_{X^{\oplus i}}\oplus \id_{X}\oplus\iota_{X^{\oplus(p-i)}}),$$
for each $0\le i\le p$.
\end{definition}

\noindent The connectivity of that semi-simplicial set $W_n(A,X)_\bullet$ is determined by the of the simplicial complex $S_n(A,X)$ under the following mild standardness assumptions on the category $U\cG$:

\begin{definition}\label{def:loc-stdness}
    Let $\cG$ be a symmetric monoidal groupoid and $A,X\in\cG$. We say that $U\cG$ is \textit{locally standard} at $(A,X)$ if
\begin{enumerate}[(i)]
        \item\label{LS1} $\iota_A\oplus\id_X\oplus \iota_X$ and $\iota_{A\oplus X}\oplus\id_X$ are distinct in $\Hom_{U\cG}\left(X,A\oplus X^{\oplus 2}\right)$,
        \item\label{LS2} for all $n\ge 0$, the map $(-\oplus \iota_X):\Hom_{U\cG}\left(X,A\oplus X^{\oplus n}\right)\to \Hom_{U\cG}\left(X,A\oplus X^{\oplus (n+1)}\right)$ is injective.
    \end{enumerate}
\end{definition}

\noindent The relationship between the connectedness of $W_n(A,X)_\bullet$ and $S_n(A,X)$ can then be stated as follows:

\begin{proposition}[{\cite[Proposition 2.9, Theorem 2.10]{WahlRW}}]\label{prop:connectivity-W-vs-S}
    Let $\cG$ be a symmetric monoidal groupoid with no zero divisors and such that $\Aut_{\cG}(0)=\{\id_0\}$. If $A,X\in\cG$ are such that $U\cG$ satisfies injectivity, local cancellation and local standardness at $(A,X)$ then, for any $n\ge 0$ and any $a,k\ge 1$, the simplicial complex $S_n(A,X)$ is $(n-a)/k$-connected if and only if the semi-simplicial set $W_n(A,X)_\bullet$ is $(n-a)/k$-connected.
\end{proposition}

\subsubsection{The stability theorem} We can now state the main stability theorem of \textcite{WahlRW}:

\begin{theorem}\label{thm:RW-W-stability}
Let $\cG$ be a symmetric monoidal groupoid with no zero divisors and where $\Aut_{\cG}(0)=\{\id_0\}$ and let $A,X\in\cG$ a pair where local cancellation and injectivity are satisfied. Assume that for every $n\ge 1$, there is a $k\ge 2$ and $a\ge 2$ such that the destabilization space $W_n(A,X)_\bullet$ is $\frac{n-a}{k}$-connected. If $F:\mathcal{C}_{A,X}\to\Ab$ is a coefficient system of degree $r$ at $N\in\bZ$, the map 
$$\left(-\oplus\id_X,F(\sigma_n^X)\right)_*:H_*(G_n,F_n)\to H_*(G_{n+1},F_{n+1})$$
is an isomorphism in the range $*\le\frac{n+2-a}{k}-r-1$, for all $n>N+2-a$. 
\end{theorem}

\begin{proof}
As in the proof of Proposition \ref{prop:extendcoefficientstofunctor} above, the assumptions that $\cG$ has no zero divisors and that local cancellation and injectivity hold at $(A,X)$ imply that $\cC_{A,X}$ is a pre-braided homogeneous category. For $a=2$, the statement for a coefficient system of degree $r$ is thus precisely the special case of \cite[Theorem A]{WahlRW} where the category $\cC$ is $\cC_{A,X}$.

\noindent If $a>2$, we apply the argument from \cite[Remark 3.5]{WahlRW}, but let us spell it out. We set $A':=A\oplus X^{\oplus a-2}$, $G_n':=\Aut_{\cG}\left(A'\oplus X^{\oplus n}\right)$ and $F':\cC_{A',X}\to\Ab$ by restricting $F$. Note that if $F$ is  of degree $r$ at $N\in\bZ$, then $F'$ is of degree $r$ at $N+2-a$. Furthermore, we have that
$$W_n(A',X)_\bullet=W_n(A\oplus X^{\oplus a-2},X)_\bullet$$
is $\frac{n-2}{r}$-connected, so if $F$ is of degree $r$, it follows that 
$$H_*(G_n',F_n')\to H_*(G_{n+1}',F_{n+1}')$$
is an isomorphism for $*\le \frac{n}{k}-r-1$, for all $n>N+2-a$. Since $G_n'=G_{n+a-2}$ and $F_n':=F_{n+a-2}$, this gives us the stated range.
\end{proof}

\subsection{Proof of Theorem \ref{thm:twostabilizations}}

Let us now apply this framework to our stabilization maps $\phi_n^s$ and $\nu_n^s$. We start by introducing two symmetric monoidal groupoids which play the role of $\cG$ in the framework above, for our two respective stabilizations. We use the perspective of Section \ref{sec:MCG-3mflds} and consider categories where the objects are $3$-manifolds, since this gives us destabilization complexes whose connectivity was determined by Hatcher and Wahl in \cite{HatcherWahl3manifolds}.

\subsubsection{The symmetric monoidal groupoid $\cG(m)$} Our symmetric monoidal groups are defined similarly in both cases, so let us define a family of groupoids which includes them both. 

\begin{definition}
    For $m\ge 1$, let $\cG(m)$ denote following monoidal groupoid: \begin{itemize}
        \item The objects of $\cG(m)$ are triples $(M,s,b)$, where either\begin{enumerate}
            \item $M$ is a smooth, compact, connected and oriented $3$-manifold with $s\ge m$ boundary components, numbered from $1$ to $s$, all of which are spheres, together with a parametrization $b:\bigsqcup^s S^2\overset{\cong}{\longrightarrow} \partial M$ of the boundary,
            \item or $M=\sqcup^m D^3$, $s=m$ and $b:\sqcup^m S^2\to \sqcup D^3$ is given by the inclusion of the boundary in each connected component.
        \end{enumerate}  
        We will typically leave $s$ and $b$ implicit in the notation. 
        \item A morphism $(M,s,b)\to (M',s',b')$ is an isotopy class of orientation preserving diffeomorphisms $M\to M'$ that preserve the parametrizations of the boundaries (in particular, it is not allowed to permute the boundary components), modulo Dehn twists along embedded 2-spheres in the source and target manifold.
        \item Note that the parametrization of the boundary gives us well-defined notions of \textit{equator}, as well as \textit{northern} and \textit{southern hemispheres} in each boundary component of an object $(M,s,b)$. For manifolds $M,M'\in\cG(m)$, with $s$ and $s'$ boundary components, respectively, we define a monoidal product $M\oplus M'$ by gluing the northern hemisphere and equator of the $1$st marked boundary component of $M$ to the southern hemisphere and equator of the $1st$ marked boundary component of $M'$, and for each $i\in\{0,\ldots,m-2\}$, the gluing the northern hemisphere and equator of the $(s-i)$th boundary component of $M$ to southern hemisphere of the $(s'-i)$th boundary component of $M'$. This is homotopic to gluing the manifolds along a 3-dimensional pairs of pants (i.e.\ $S^3$ with three disjointly embedded open 3-disks removed) at each of the $m$ chosen boundary components. The $s+s'-m$  parametrized boundary components in $M\oplus M'$ are ordered so that the glued ones come first, then the remaining $s-m$ ones from $M$ and lastly the $s'-m$ ones from $M'$. 
        \item The monoidal unit is $0:=(\sqcup^m D^3, m,b)$.
    \end{itemize}
\end{definition}

\begin{figure}[h]
    \centering
    \includegraphics[scale=0.25]{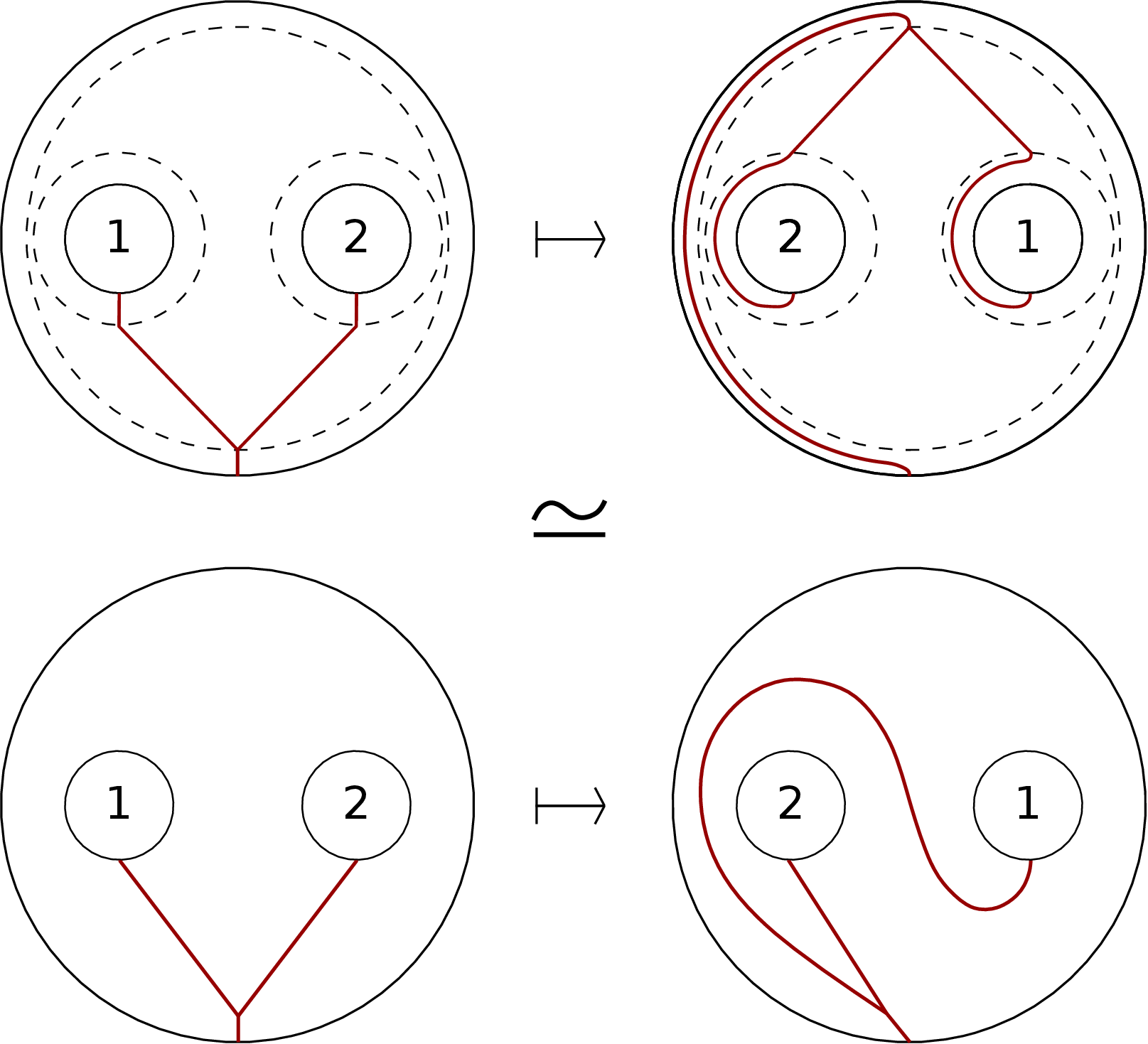}
    \caption{The 2-dimensional version of the symmetry in $\mathcal{G}(m)$. The ``outer'' half Dehn twist switches the places of the ``inner'' boundary components, while the other two half Dehn twists make sure the automorphism fixes the boundary components. An isotopic automorphism which looks a bit simpler is illustrated as in the bottom picture.}
    \label{fig:braiding}
\end{figure}

\begin{remark}
    It might not be immediately clear why it is well-defined to quotient by pre- and postcomposition by Dehn twists along embedded 2-spheres in the definition of the morphisms in $\cG(m)$. To see this, we use that if $\psi:M\to N$ is a diffeomorphism of $3$-manifolds, $S\subset M$ is a $2$-sphere, and we denote by $T_S$ the Dehn twist around $S$, then it is easy to verify that $\psi\circ T_S\circ\psi^{-1}=T_{\psi(S)}$. If we have two composable diffeomorphisms $\phi:L\to M$ and $\psi:M\to N$ and $S\subset M$ is an embedded $2$-sphere, we thus have $\psi\circ T_S\circ \phi=T_{\psi(S)}\circ\psi\circ\phi$.
\end{remark}

\begin{remark}
    It might seem more natural to define the monoidal product by gluing the $i$th boundary component of $M$ to the $i$th boundary component of $M'$ for each $i\in\{1,\ldots,m\}$, but we have chosen this somewhat strange convention in order to match with our definition of the stabilization map $\nu_n^s:A_n^s\to A_{n+1}^{s}$ (see Figure \ref{fig:nu-manifold} below). In fact, below we only consider the monoidal groupoid $\cG(m)$ for $m=1,2$, in which case we either glue along the first boundary component, or the first and last boundary component.
\end{remark}

\begin{remark}
    This monoidal product is not actually strictly monoidal, but can be ``strictified'' in the same way as in \cite[Section 3]{GalatiusKupersRW-Cells}, by instead considering manifolds whose boundary is a cube $I^3$, with two chosen faces, and gluing along these. Similarly to the examples considered in \cite[Sections 5.6-5.7]{WahlRW}, we will work with the non-strict version, since it is somewhat more practical in our proofs. 
\end{remark}

\begin{proposition}
    The monoidal category $(\cG(m),\oplus, 0)$ has no zero divisors and $\Aut_{\cG(m)}(0)=\{\id_0\}$.
\end{proposition}

\begin{proof}
    If $M\oplus M'=0$, then $M$ and $M'$ must have been disconnected in the first place, so since $0$ is the only disconnected manifold in our category, the first claim follows. If $f\in\Aut_{\cG(m)}(0)$, we have by definition $f\circ b=b$, so $f$ cannot permute the connected components. By the Alexander trick, it thus follows that $f$ must be trivial. The last part now follows by Proposition \ref{prop:underlying-groupoid}.
\end{proof}

\noindent By Proposition \ref{prop:underlying-groupoid}, in particular we get the following corollary as a consequence:

\begin{corollary}
     For any $A\in\cG(m)$, we have $\Aut_{\cG(m)}(A)=\Aut_{U\cG(m)}(A)$
\end{corollary}

\noindent We define a symmetry on $\cG(m)$ by choosing a neighborhood of each gluing, which is a 3-dimensional pair of pants, and taking the isotopy class of the automorphism of each pair of pants which is given by doing half a Dehn twist in a neighborhood of each of its boundary components. A 2-dimensional analogue of this automorphism is illustrated in Figure \ref{fig:braiding}.

\begin{figure}[h]
    \centering
    \includegraphics[scale=0.6]{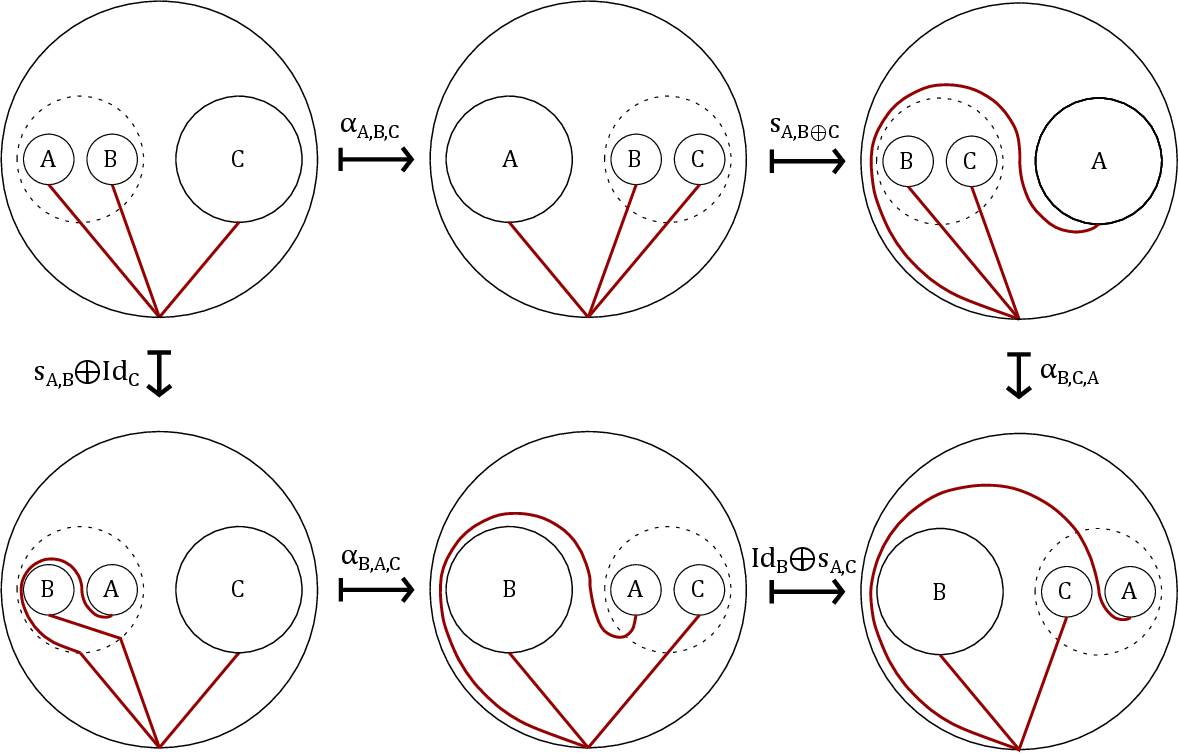}
    \caption{Illustration of the associativity coherence of the symmetry in the symmetric monoidal groupoid $\cG(m)$, where $s$ denotes the symmetry and $\alpha$ is the associator.}
    \label{fig:associativityrel}
\end{figure}

 \noindent The square of this automorphism is given by Dehn twisting a neighborhood of all three boundary components in each pair of pants. In the observation (I) of \cite[Section 2]{HatcherWahl3manifolds}, it is explained why such an automorphism of a 3-dimensional pair of pants is isotopic to the identity, meaning that our symmetry squares to the identity. Furthermore, from Figure \ref{fig:associativityrel} we see that it satisfies the braid relation, so $\cG(m)$ is a symmetric monoidal groupoid.

\subsubsection{The complex of arcs and coconnected sphere systems} For each stabilization we consider, we need to determine the connectivity of its associated destabilization complex. In order to do this, we show in each case that it is isomorphic to a special case of a complex defined by Hatcher and Wahl in \cite[Section 4.2]{HatcherWahl3manifolds} (see also \cite[Section 2]{HatcherStability}), who determined its connectivity. To define this complex, we first need to introduce the notion of a \textit{sphere system} in a $3$-manifold $M$:

\begin{definition}
    Let $M$ be an oriented 3-manifold, possibly with boundary. A $(p+1)$-sphere system in $M$ is a smooth embedding $f:\sqcup^{p+1} S^2\hookrightarrow M$, where no individual embedded sphere bounds a $3$-ball in $M$ or is isotopic to a boundary sphere and where the embedded spheres are pairwise non-isotopic. We call such a system \textit{coconnected} if $M\setminus\Image(f)$ is a connected manifold.
\end{definition}

\noindent The isotopy class of a sphere system is determined by the isotopy classes of the individual embedded spheres, by the following result of Laudenbach:

\begin{lemma}\label{lem:homotopic-sphere-systems-are-isotopic}
    Homotopic sphere systems in $M$ are isotopic.
\end{lemma}

\begin{proof}
    For a single embedded sphere, this is {\cite[Théorème III.1.3.]{Laudenbach-Sphere-Systems} and \cite[Lemma on p. 124]{Laudenbach-Sphere-Systems}} tells us that this extends to sphere systems.
\end{proof}

\noindent We will show that the simplices in our destabilization complexes correspond to isotopy classes of sphere systems, together with some additional data. With $I=[0,1]$ denoting the unit interval, let $S^2+I$ be the quotient of $S^2\sqcup I$, given by identifying the midpoint of $I$ with a base point in $S^2$. For $M$ an oriented 3-manifold with boundary, we choose points $x_0$ and $x_1$ in the boundary of $M$ (where $x_0=x_1$ is allowed). 

\begin{definition}[{see \cite[Section 4.2]{HatcherWahl3manifolds}}]
     We define $X^{\mathrm{A}}(M,x_0,x_1)$ to be the simplicial complex whose $0$-simplices are isotopy classes (relative to $x_0$ and $x_1$) of maps $f:S^2+I\to M$, with the following properties:\begin{enumerate}[(i)]
    \item $f$ is smooth on $S^2$ and $I$,
    \item the restriction of $f$ to $S^2+(0,1)$ is an embedding into $\mathring M$,
    \item $f(I)$ and $f(S^2)$ intersect transversely in $M$,
    \item $f(0)=x_0$ and $f(1)=x_1$ and
    \item the induced orientations of $f(I)$ and $f(S^2)$ combine to give the orientation on $M$.
\end{enumerate} 
     Furthermore, $(f_0,\ldots,f_p)$ forms a $p$-simplex in $X^{\mathrm{A}}(M,x_0,x_1)$ if whenever $i\neq j$, the image of $f_i$ is disjoint from that of $f_j$ (up to isotopy fixing $x_0$ and $x_1$), outside of the endpoints, and $\sqcup_i f_i:\sqcup^{p+1} S^2\to M$ defines the isotopy class of a coconnected sphere system. 
\end{definition}

\begin{remark}
    The fact that this is a well-defined simplicial complex follows from Lemma \ref{lem:homotopic-sphere-systems-are-isotopic}.
\end{remark}

\noindent Hatcher and Wahl proved the following connectivity for this simplicial complex:

\begin{proposition}{\cite[Proposition 4.5]{HatcherWahl3manifolds}}
For $M$ a compact, connected and oriented $3$-manifold, the complex $X^{\mathrm{A}}(M,x_0,x_1)$ is $\frac{n-3}{2}$-connected, where $n$ is the number of $S^1\times S^2$-summands in the prime decomposition of $M$ as a connected sum.
\end{proposition}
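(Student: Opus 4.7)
The plan is to prove this by induction on $n$, following the general strategy for complexes of spheres and arcs in 3-manifolds developed by Hatcher and extended by Hatcher--Wahl. The base of the induction is non-emptiness (the case $k=-1$, i.e.\ $n\ge 1$): taking an essential non-separating 2-sphere $S$ inside one of the $S^1\times S^2$ summands, the complement $M\setminus S$ is connected, so $S$ is coconnected. One then draws an arc from $x_0$ to $x_1$ meeting $S$ transversely in a single point at its midpoint, and after orienting compatibly one obtains a vertex of $X^{\mathrm{A}}(M,x_0,x_1)$.

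For the induction step I would take a simplicial map $\sigma:S^k\to X^{\mathrm{A}}(M,x_0,x_1)$ with $k\le\tfrac{n-3}{2}$ and extend it to $D^{k+1}$. The standard mechanism is a link argument: show that the link of every vertex $\{f\}$ in $X^{\mathrm{A}}(M,x_0,x_1)$ is at least $\tfrac{n-5}{2}$-connected. To identify the link one cuts $M$ along $f(S^2)$; since $f(S^2)$ is coconnected the result is a connected 3-manifold $M'$ with two new parametrized boundary spheres $S_\pm$, and with $n-1$ summands of type $S^1\times S^2$ in its prime decomposition (cutting a non-separating sphere drops the genus by one). The arc $f(I)$ splits into two subarcs $\alpha_\pm$ from $x_0,x_1$ to chosen basepoints $y_\pm\in S_\pm$. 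A vertex $\{g\}$ in the link can, by general position, be isotoped off both $f(S^2)$ and $f(I)$, so it yields an embedding into $M'$ meeting the new boundary components only at $y_\pm$, and after absorbing $\alpha_\pm$ into the new boundary parametrizations one identifies the link of $\{f\}$ with $X^{\mathrm{A}}(M',x_0',x_1')$ for suitably chosen marked points $x_0',x_1'$. The inductive hypothesis then gives the desired connectivity of the link, and combined with a general position argument for the filling disk $D^{k+1}$ one extends $\sigma$ as required.

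The main obstacle is carrying out the link identification cleanly with the additional arc structure. Two subtleties enter: first, the transversality needed to isotope a representative of a link vertex off the one-dimensional set $f(I)$, which in a 3-manifold lies at the borderline of codimension (arcs disjoint from arcs is codimension $1$ in a $3$-manifold, but \emph{isotoping} an arc off another arc is a codimension $2$ problem and so generically possible); second, the verification that coconnectedness of a collection of spheres in $M$ is equivalent to coconnectedness of the corresponding collection in $M'$, so that the link truly realizes $X^{\mathrm{A}}(M',x_0',x_1')$ as a full subcomplex. Once these two points are settled, the induction closes and the claimed $\tfrac{n-3}{2}$-connectivity follows by standard means.
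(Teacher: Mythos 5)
The paper does not prove this proposition: it is quoted verbatim as \cite[Proposition 4.5]{HatcherWahl3manifolds} and used as a black box to establish the connectivity of the destabilization complex $S_n(A',X')$ via Proposition~\ref{prop:simpcomplexes}. So there is no ``paper's own proof'' to compare your sketch against; any assessment has to be of your sketch on its own terms, and against what Hatcher and Wahl actually do.

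On those terms, the central difficulty is not addressed. High connectivity of all vertex links does not, by itself, imply high connectivity of the complex --- that implication runs the other way (it is part of the conclusion that a complex is weakly Cohen--Macaulay, not a tool for establishing connectivity from scratch). What is actually needed, and what Hatcher and Wahl supply, is a surgery argument in the style of Hatcher's sphere-complex work: after triangulating and putting a filling map $D^{k+1}\to M$ in general position relative to the spheres and arcs, one systematically reduces intersections and ``bad'' simplices by surgering spheres along innermost disks of intersection, pushing the filling into the complex. Your phrase ``combined with a general position argument for the filling disk'' is carrying the entire weight of the proof, and general position alone will not do it; in particular two embedded $2$-spheres in a $3$-manifold generically intersect in circles, and removing those circles is a non-trivial surgery step, not a transversality step. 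A secondary issue is the link identification itself: after cutting $M$ along $f(S^2)$ and removing a neighborhood of the two residual arcs $\alpha_\pm$, the resulting manifold does not simply have two ``new'' boundary spheres $S_\pm$ --- each of $S_\pm$ gets connect-summed along the tube around $\alpha_\pm$ with the original boundary sphere containing $x_0$ (resp.\ $x_1$), and the marked points must be relocated onto these amalgamated boundary spheres before the link can be recognized as an $X^{\mathrm{A}}$-complex of a smaller manifold. You gesture at this (``absorbing $\alpha_\pm$ into the new boundary parametrizations'') but it needs to be made precise, as does the codimension count for keeping the arc of $g$ off the arcs $\alpha_\pm$ once the sphere of $g$ has been surgered. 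Hatcher--Wahl's actual proof builds up from the connectivity of the pure sphere complex (due to Hatcher) by a sequence of projection/fiber comparisons rather than the direct link induction you propose, precisely to sidestep these issues.
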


\begin{figure}[h]
    \centering
    \includegraphics[scale=0.23]{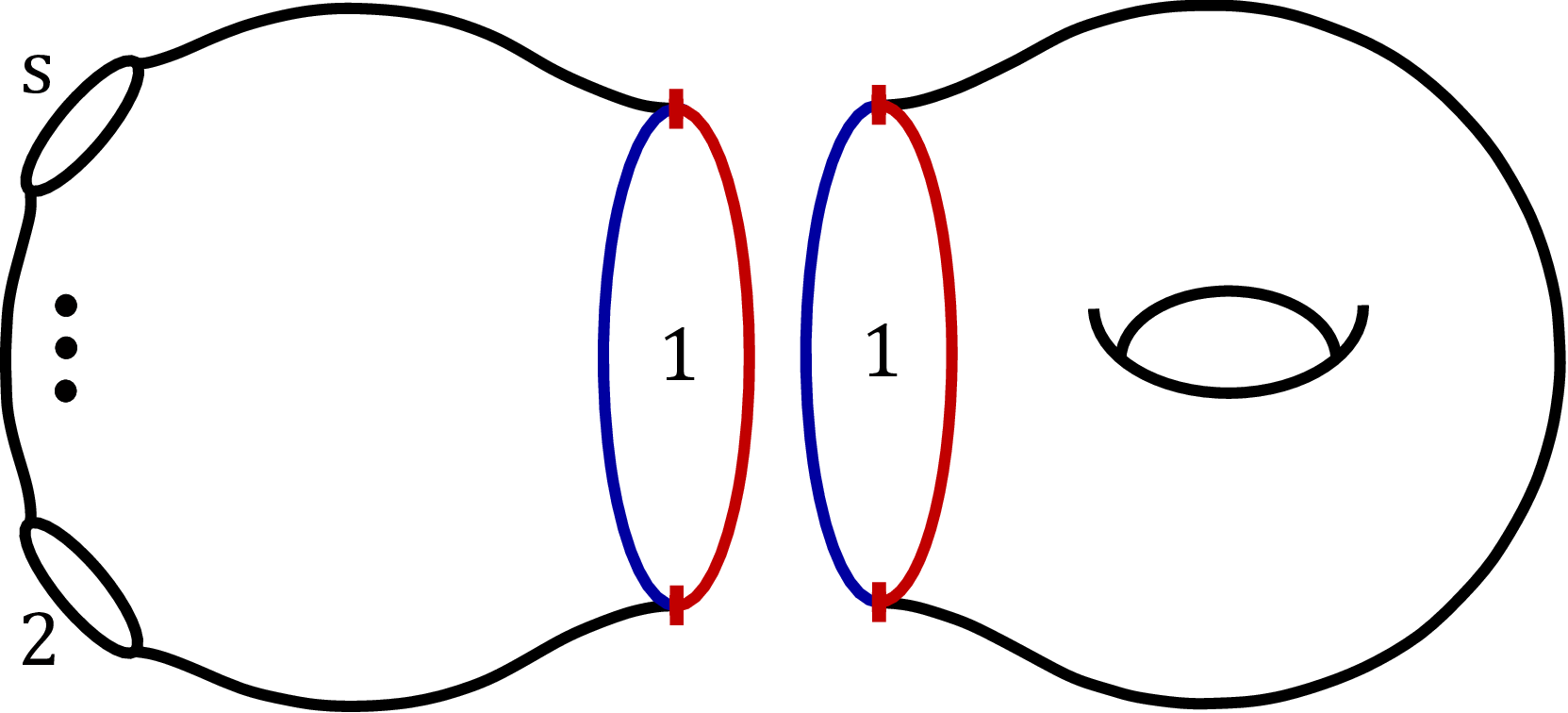}
    \caption{A 2-dimensional analogue of how to construct $A_1\oplus X_1\in U\cG(1)$, by gluing the northern hemisphere (red) of the boundary of the left manifold to the southern hemisphere (blue) of the boundary of the right manifold.}
    \label{fig:phi-manifold}
\end{figure}

\subsubsection{The stabilization $\phi_n^s$} We now consider the stabilization map $\phi_n^s$. For $s\ge 1$, we let $A_1\in\cG(1)$ be $S^3\setminus(\sqcup^s \mathring{D}^3)$, and let $X_1\in\cG(1)$ be the manifold $(S^1\times S^2)\setminus \mathring{D}^3$. Analogues in two dimensions of $A_1$ and $X_1$ and how to glue them to get $A_1\oplus X_1$ are illustrated in Figure \ref{fig:phi-manifold}. It follows that $\Aut_{U\cG(1)}\left(A_1\oplus X_1^{\oplus n}\right)\cong \Gamma_n^s$ and the stabilization map $-\oplus\mathrm{id}_{X_1}$ is the standard inclusion map $\phi_n^s:\Gamma_{n}^s\hookrightarrow \Gamma_{n+1}^s$, which is known to be injective (see for example \cite[Proposition 2.3]{HatcherWahl3manifolds}). Local cancellation holds due to the prime decomposition theorem for 3-manifolds. In addition, we have the following:

\begin{proposition}\label{prop:LS-for-UG(1)}
    The category $U\cG(1)$ satisfies local standardness at $(A_1,X_1)$.
\end{proposition}

\begin{proof}
    The proof is elementary, so we sketch the main points and leave the remaining details as an exercise to the interested reader. For brevity, let us write $X=X_1$ and $A=A_1$ throughout the proof.
    
    We start by noting that for any $n\ge 1$, the mapping sets in $U\cG$ are defined precisely so that as sets we have
    \begin{equation}\label{eq:homs-in-UG-as-sets}
        \Hom_{U\cG}(X,A\oplus X^{\oplus n})\cong A_n^s/A_{n-1}^s,
    \end{equation}
    where we consider $A_{n-1}^s$ as a subgroup of $A_n^s$ via the (injective) stabilization map $\phi_{n-1}^s:A_{n-1}^s\to A_n^s$. 
    
    To verify \eqref{LS1} of Definition \ref{def:loc-stdness}, note that by the description of the symmetric monoidal structure on $U\cG(1)$ from Proposition \ref{prop:mononidal-structure-on-UG}, we have
    \begin{align}
        \iota_A\oplus\id_{X}\oplus\iota_X&=[A\oplus X,\id_{A\oplus X^{\oplus 2}}\circ(\id_A\oplus s_{X,X}^{-1})] ,\label{LS1-first-map}\\
        \iota_{A\oplus X}\oplus\id_X&=[A\oplus X,\id_{A\oplus X^{\oplus 2}}]\label{LS1-second-map}.
    \end{align}
    In terms of the identification \eqref{eq:homs-in-UG-as-sets}, we identify \eqref{LS1-first-map} as represented by the element $(\tau,1,\ldots,1)\in A_2^s$, where $\tau\in \Aut(F_2)$ is the transposition of the standard generators. Meanwhile, \eqref{LS1-second-map} is represented by the identity element and an elementary calculation shows that these are not in the same coset in $A_2^s/A_1^s$.  

    To verify \eqref{LS2} of Definition \ref{def:loc-stdness}, we note that in terms of the identification \eqref{eq:homs-in-UG-as-sets}, the map
    $(-\oplus\iota_X):\Hom_{U\cG}(X,A\oplus X^{\oplus n})\to\Hom_{U\cG}(X,A\oplus X^{\oplus (n+1)})$
    can be described as follows: similarly to above, let $\tau\in\Aut(F_{n+1})$ denote the transposition of the standard generators $x_n$ and $x_{n+1}$ and let $c_\tau: A_{n+1}^s\to A_{n+1}^s$ be given by the diagonal action of $\tau$ (i.e.\ via conjugation on the first factor and directly via the action on $F_{n+1}$ on the remaining factors). In these terms, we identify the map $(-\oplus\iota_X)$ as the map induced on the cosets by the composite map $c_\tau\circ\phi_n^s:A_n^s\to A_{n+1}^s$. Again, an elementary direct calculation shows that the induced map of sets $A_n^s/A_{n-1}^s\to A_{n+1}^s/A_n^s$ is injective.    
\end{proof}

\noindent By Proposition \ref{prop:connectivity-W-vs-S}, it follows that for any $n\ge 0$, the connectivity of $W_n(A_1,X_1)_\bullet$ is equal to that of $S_n(A_1,X_1)$. In order to obtain homological stability with finite degree coefficients, it thus only remains to compute the connectivity of the simplicial complex $S_n(A_1,X_1)$. We prove this by comparing it to the complex of arcs and coconnected sphere systems introduced above:

\begin{proposition}\label{prop:firstcomplexiso}
    For $n\ge 1$, let $M=A_1\oplus X_1^{\oplus n}$ and $x_0=x_1$ be the north pole of the first component of $\partial M$. Then we have an isomorphism of simplicial complexes $S_n(A_1,X_1)\cong X^{\mathrm{A}}(M,x_0,x_1)$.
\end{proposition}

\begin{proof}
We recall that a 0-simplex of $S_n(A_1,X_1)$ is an equivalence class of pairs $(B,g)$, where $B\in\cG(1)$ and $g:B\oplus X_1\to A_1\oplus X_1^{\oplus n}$ is an isomorphism in $\cG(1)$, where $(B,g)\sim (B',g')$ if there is an isomorphism $h:B\to B'$ in $\cG(1)$ such that $g=g'\circ(h\oplus\id_{X_1})$. 

Let $[B,g]\in S_n(A_1,X_1)_0$ and let $\tilde{g}:B\oplus X_1\to A_1\oplus X_1^{\oplus n}$ be a boundary preserving diffeomorphism such that the isotopy class of $\tilde{g}$ is equal to $g$, modulo Dehn twists around 2-spheres. By precomposing with the canonical smooth embedding $X_1\hookrightarrow B\oplus X_1$, this gives us an embedding $\iota_g:X_1\hookrightarrow A_1\oplus X_1^{\oplus n}$.

We may assume that the parametrization of the boundary is chosen so that there is a point $y\in S^2$ such that the circle $S^1\times\{y\}\subset S^1\times S^2$ intersects the boundary of the deleted ball $\mathring D^3$ of $X_1$ only in $x_0$. Picking some $t\in [0,1]$ such that  $\{t\}\times S^2\subseteq X_1$ does not intersect the boundary, and which intersects $S^1\times\{y\}$ transversely, we thus get a map $f_0:S^2+I\to X_1$. We thus get a vertex $f$ of $X^{\mathrm{A}}(M,x_0,x_1)$ by taking the isotopy class of $\iota_g\circ f_0$. 

To show that this gives us a well-defined map $S_n(A_1,X_1)_0\to X^{\mathrm{A}}(M,x_0,x_1)_0$, we need to show that $f$ is independent of the choice of representative $(B,g)$ as well as of the diffeomorphism $\tilde{g}$ representing $g$. The second statement holds because neither embedded circles nor embedded $2$-spheres are affected by Dehn twisting around $2$-spheres, up to isotopy. If $(B,g)\sim(B',g')$, we have $g=g'\circ(h\oplus\id_{X_1})$, for some $h:B\to B'$, so $g$ and $g'$ agree when restricted to $X_1$, proving the first statement.

This thus defines a map $\Phi:S_n(A_1,X_1)_0\to X^{\mathrm{A}}(M,x_0,x_1)_0$. To see that $\Phi$ is a bijection, we show that if $f:S^2+I\hookrightarrow A_1\oplus X_1^{\oplus n}$ is a vertex of $X^A(M,x_0,x_1)$, it determines a unique equivalence class $[B,g]\in \Hom_{U\cG}\left(X_1,A_1\oplus X_1^{\oplus n}\right)$, which is mapped to the isotopy class of $f$ under the map we have defined. To see this, we first note that given such $f$, we can take a regular neighborhood $N$ of $f(S^2+I)\subset A_1\oplus X_1^{\oplus n}$, which we may assume intersects the boundary component precisely in the (closed) northern hemisphere. Such a regular neighborhood $N$ is diffeomorphic to $X_1$, and we can parametrize the boundary component of $N$ so that its southern hemisphere precisely is the northern hemisphere of the boundary component of $A_1\oplus X_1^{\oplus n}$. If we let $B:=(A_1\oplus X_1^{\oplus n})\setminus \mathring N$, whose boundary we parametrize in a similar way, we get a boundary preserving diffeomorphism $g:B\oplus X_1\to A_1\oplus X_1^{\oplus n}$, such that $\Phi([B,g])=f$. This shows surjectivity. 

To see injectivity, suppose $(B,g)$ and $(B',g')$ are representatives of vertices in $S_n(A_1,X_1)$ such that $\Phi([B,g])=\Phi([B',g'])$. By local cancellation, we may assume $B=B'$. We want to show that $g^{-1}\circ g'=h\oplus\id_{X_1}$, for some $h:B\to B$. Letting $S^1\times\{y\}\cup\{t\}\times S^2\subset X_1$ be as before and $f_0:S^2+I\to B\oplus X_1$ be the associated map, we have by assumption that $g^{-1}\circ g'\circ f_0$ and $f_0$ are isotopic maps $S^2 + I\to B\oplus X_1$ (and the image of both are in the second summand by definition). The summand $X_1$ is a regular neighborhood of $f_0(S^2+I)$, so any representative of $g'\circ g^{-1}:B\oplus X_1\to B\oplus X_1$ can be isotoped to one of the form $h\oplus h':B\oplus X_1\to B\oplus X_1$. Furthermore, $h'$ defines the class (up to isotopy and Dehn twists around 2-spheres) of a diffeomorphism of $X_1$ which fixes $\partial X_1$, as well as set-wise fixes the embedded sphere $f_0(S^2)$ and the point $f_0(1/2)$ on this embedded sphere, which is in the interior of $X_1$. By \cite[Proposition 2.1]{HatcherWahl3manifolds}, a diffeomorphism of $X_1$ which fixes the boundary pointwise and fixes a point in the interior is, up to isotopy, either a Dehn twist around an embedded $2$-sphere in $X_1$, or given by pushing the boundary sphere along a loop in $X_1$ with its endpoints on the sphere, or is non-trivial in $\Aut(\pi_1(X_1,f_0(1/2)))=\Aut(\bZ)={\pm 1}$. The condition of fixing $f_0(S^2)$ set-wise means that the diffeomorphism restricts to a diffeomorphism $X_1\setminus f_0(S^2)$, which rules out the second possibility, and since we are modding out by Dehn twists around $2$-spheres, only the third option remains. However, the non-identity element of $\Aut(\pi_1(X_1,f_0(1/2)))$ reverses the orientation of $f_0(I)$, so the fact that $g^{-1}\circ g'\circ f_0|_I$ is isotopic to $f_0|_I$ also rules out this possibility. Thus we must have $g^{-1}\circ g|_{X_1}= \id_{X_1}$ and so $[B,g]=[B,g']$.

Finally, we need to show that $\Phi$ defines a map of simplicial complexes, i.e.\ that it sends $p$-simplices to $p$-simplices. A $(p+1)$-tuple of vertices  $[B_0,g_0],\ldots,$ $[B_p,g_p]$ in $S_n(A_1,X_1)$ defines a $p$-simplex if there exists $[B,g]\in\Hom_{\cG}(X_1^{\oplus (p+1)},A_1\oplus X_1^{\oplus n})$ whose vertices are $[B_0,g_0],\ldots,[B_p,g_p]$. In other words, if we let
$$\iota_i:=\iota_{X_1^{\oplus i}}\oplus\id_{X_1}\oplus \iota_{X_1^{\oplus (p-i)}}:X_1\longrightarrow X_1^{\oplus (p+1)},$$ then $[B_i,g_i]=[B\oplus X_1^{\oplus p},g\circ (\id_{B\oplus X_1^{\oplus p}}\oplus\iota_i)]$, for $0\le i\le p$. From this it is immediate that $\Phi$ sends a $p$-simplex to a $(p+1)$-tuple $(f_0,\ldots, f_p)$ of maps $S^2+I\to A_1\oplus X_1^{\oplus n}$, whose images are disjoint except at $x_0=x_1$. With $\{t\}\times S^2\subset X_1$ as above, cutting $X_1$ along this embedded 2-sphere leaves it connected, so the sphere system of $\Phi([B_0,g_0],\ldots,[B_p,g_p])$ is coconnected. This proves that $\Phi$ is a map of simplicial complexes and since it is a bijection on the vertices, it is an isomorphism.
\end{proof}

\noindent Applying Theorem \ref{thm:RW-W-stability} gives us the following:

\begin{theorem}\label{thm:stab1}
With $(A_1,X_1)$ as above for $s\ge 1$ and $G_n=\Aut_\mathcal{G}(A_1\oplus X_1^{\oplus n})\cong \Gamma_n^s$, and $F:\mathcal{C}_{A_1,X_1}\to\Ab$ a coefficient system of degree $r$ at $N\in\bZ $, the map
$$\left(\phi_n^s,F(\sigma_n^{X_1})\right)_*:H_*(\Gamma_n^s,F_n)\to H_*(\Gamma_{n+1}^s,F_{n+1})$$
is an isomorphism in the range $2{*}\le n-2r-3$ and $n\ge N$.
\end{theorem}

\begin{remark}
    The case $s=1$ is essentially a special case of \cite[Theorem 5.31]{WahlRW}, although we have defined our category slightly differently.
\end{remark}

\noindent For brevity, let us write $M_n^s:=A_1\oplus X_1^{\oplus n}$. We have a sequence $\{F^1_n\}_{n\ge 0}$ of $G_n$-representations given by $F_n^1:=H^*_\bZ(n,s)$, and with $G_n$-equivariant stabilization maps $\sigma_n^1:F_n^1\to F_{n+1}^1$ given by $\sigma_n^1=(m_n^{s+1}\circ a_{n+1}^s)^*$, where $m_n^{s+1}$  and $a_{n+1}^s$ are defined in Section \ref{subsubsec:twistedstabilizationmaps} and we use $(-)^*$ to denote the dual map. We know that conditions \eqref{cond-i:extend-coeff-system}-\eqref{cond-iii:extend-coeff-system} of Proposition \ref{prop:extendcoefficientstofunctor} hold. Furthermore, $\sigma_n^1$ is easily verified to satisfy condition \eqref{cond-iv:extend-coeff-system} of Proposition \ref{prop:extendcoefficientstofunctor}, so it follows that it defines a coefficient system $F^1:\cC_{A_1,X_1}\to\Ab$. 

\begin{proposition}\label{prop:firstdegree1}
    The coefficient system $F^1:\cC_{A_1,X_1}\to\Ab$ is of degree 1 at 0.
\end{proposition}

\begin{proof}
    We first need to show that $\kappa_{X_1}(F^1):\cC_{A_1,X_1}\to\Ab$ is the zero functor. We recall that for $B=M_n^s$ we have $F^1(\sigma_{X_1})_B=F^1(\sigma_{X_1}^n)$, where 
    $$\sigma_{X_1}^n=(s_{X_1,A_1}\oplus \id)\circ(\iota_{X_1}\oplus\id):M_n^s\to M_{n+1}^s.$$
    From Remark \ref{remark:upperlowersusp} we know that
    $$\sigma_{X_1}^n=(s_{X_1,A_1}\oplus \id_{X_1^{\oplus n}})\circ s_{A_1\oplus X_1^{\oplus n},X_1}\circ\sigma^{X_1}_n.$$
    Since the symmetry maps are isomorphisms, it is sufficient to show that $F^1(\sigma_n^{X_1})$ is injective. The map $F^1(\sigma_n^{X_1})=\sigma_n^1$ is the dual of the standard projection $\bZ^{n+1}\oplus\bZ^{s-1}\twoheadrightarrow \bZ^{n}\oplus\bZ^{s-1}$, and is thus injective, so it follows that $\kappa_{X_1}(F^1)$ is the zero functor. Furthermore, $\delta_{X_1}(F^1)(A_1\oplus X_1^{\oplus n})\cong\bZ$, independently of $n$, and $\delta_{X_1}(F^1)(\sigma_n^{X_1})=\id_{\bZ}$, so the cokernel has degree 0.
\end{proof}

\noindent As both $F^1$ and $\delta_{X_1}(F^1)$ take values in finitely generated free abelian groups and $\delta_{X_1}^r(F^1)=0$ for $r\ge 2$, combining this with Corollary \ref{cor:tensorpowercoefficients} and Theorem \ref{thm:stab1}, we get:

\begin{corollary}\label{cor:firststab}
    For each $r\ge 0$, there is some $N(r)\in\bZ$, such that for any any $s\ge 1$, the map
    $$(\phi_n^s,(m_n^{s+1}\circ a_{n+1}^s)^*)_*:H_*(A_n^s,H^*_\bZ(n,s)^{\otimes r})\to H_*(A_{n+1}^s,H^*_{\bZ}(n+1,s)^{\otimes r})$$
    is an isomorphism in the range $2{*}\le n-2r-3$ as long as $n\ge N(r)$.
\end{corollary}

\subsubsection{The stabilization $\nu_n^s$} Let us now consider the stabilization map $\nu_n^s:\cG_n^s\to\cG_{n+1}^s$. For $s\ge 2$, we let $A_2\in U\cG(2)$ be $S^3\setminus(\sqcup^s \mathring{D}^3)$. We let $X_2\in\cG(2)$ be the manifold $S^2\times I$. Analogues in two dimensions of $A_2$ and $X_2$ and how to glue them to get $A_2\oplus X_2$ are illustrated in Figure \ref{fig:nu-manifold}. Once again $\Aut_{U\cG(2)}\left(A_2\oplus X_2^{\oplus n}\right)\cong \Gamma_n^s$, but the stabilization map $-\oplus\id_{X_2}$ is the homomorphism $\nu_n^s:\Gamma_{n}^s\to \Gamma_{n+1}^s$. 

\begin{figure}[h]
    \centering
    \includegraphics[scale=0.23]{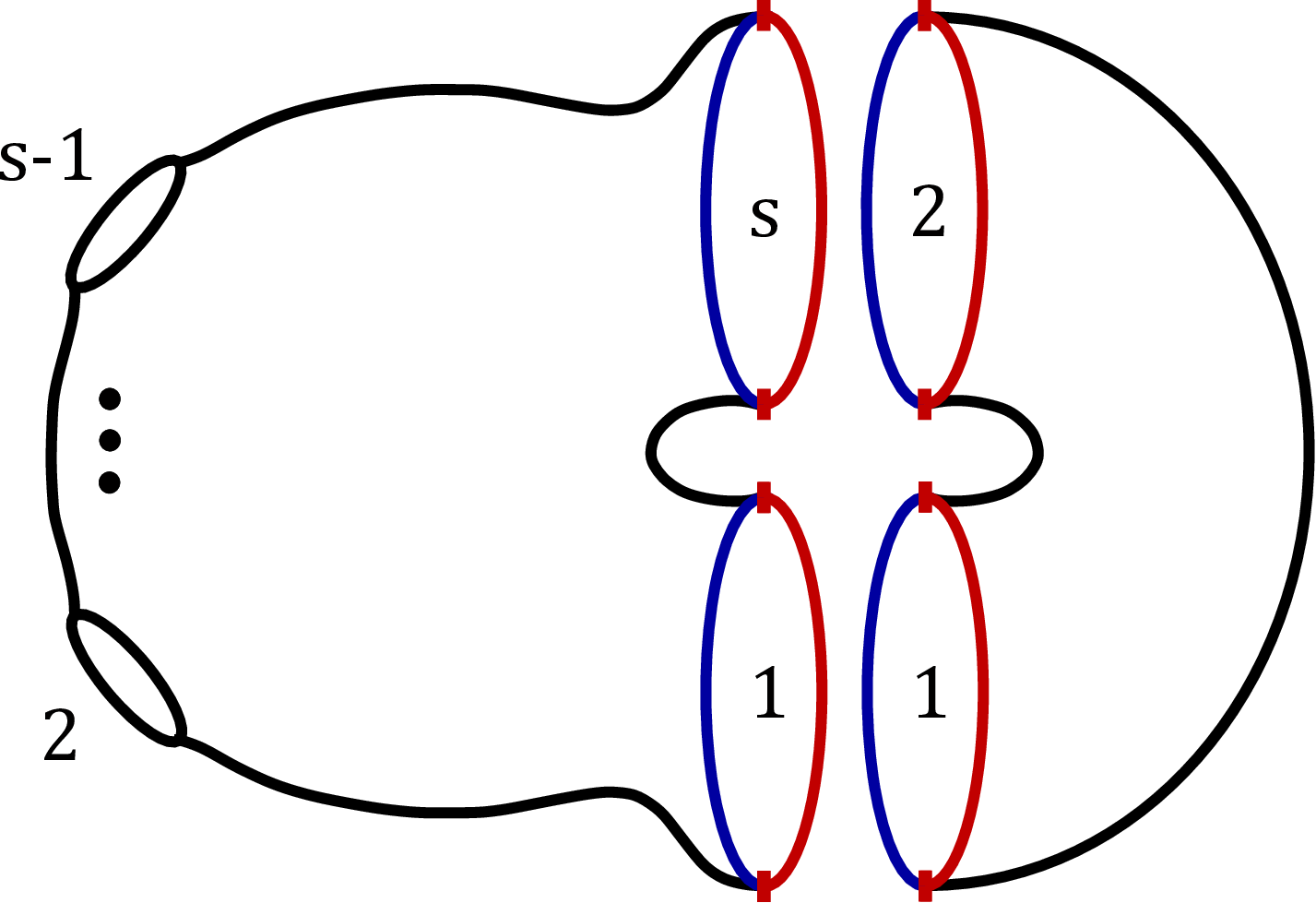}
    \caption{A 2-dimensional analogue of how to construct $A_2\oplus X_2\in U\cG(2)$.}
    \label{fig:nu-manifold}
\end{figure}

\noindent Local cancellation again follows from the prime decomposition theorem for 3-manifolds, but we need to prove injectivity, since this does not appear in the literature, to the knowledge of the author.

\begin{proposition}
The homomorphism $\nu_n^s:\Gamma_n^s\to \Gamma_{n+1}^s$ is injective for any $n\ge 1$.
\end{proposition}

\begin{proof}
Recall that $\Gamma_n^s\cong A_n^s=\Aut(F_n)\ltimes F_n^{\times (s-1)}$, where $\Aut(F_n)$ acts diagonally on $F_n^{\times (s-1)}$. We have by definition $\nu_n^s=\mu_{n+1}^{s-1}\circ\alpha_n^s$ and the homomorphism $\mu_{n+1}^{s-1}:A_{n+1}^{s-1}\to A_{n+1}^{s}$ is simply the standard inclusion $\Aut(F_{n+1})\ltimes F_{n+1}^{\times(s-2)}\hookrightarrow\Aut(F_{n+1})\ltimes F_{n+1}^{\times (s-1)}$, which is injective by definition. Thus we only need to prove injectivity of $\alpha_n^s$. Let $\{x_1,\ldots,x_{n+1}\}$ be the standard generators of $F_{n+1}$. By the descriptions of generators \eqref{AnsGen3} and \eqref{AnsGen4} of $A_n^s$ above, we have
$$\alpha_n^s(\psi,y_1,\ldots,y_{s-1})=(L_{y_{s-1},x_{n+1}}\circ\phi_{n}^1(\psi),y_1,\ldots,y_{s-2}),$$
where $\phi_n^1:\Aut(F_n)\to\Aut(F_{n+1})$ is the standard inclusion and $L_{y_{s-1},x_{n+1}}\in\Aut(F_{n+1})$ is defined by $x_{n+1}\mapsto y_{s-1}x_{n+1}$ and acting trivially on the remaining generators. 

If $(\psi,y_1,\ldots, y_{s-1})\in\ker(\alpha_n^s)$, we thus have $y_1=\cdots=y_{s-2}=1$, where we use $1$ to denote the identity element in $F_n$ (and in $F_{n+1}$), and we also have
$$x_{n+1}=(L_{y_{s-1},x_{n+1}}\circ\phi_n^1(\psi))(x_{n+1})=L_{y_{s-1},x_{n+1}}(x_{n+1})=y_{s-1}x_{n+1},$$
so $y_{s-1}=1$. Thus $L_{y_{s-1},x_{n+1}}=\mathrm{id}_{F_{n+1}}$. For $1\le k\le n$ we thus have
$$x_k=(L_{y_{s-1},x_{n+1}}\circ\phi_n^1(\psi))(x_k)=\phi_n^1(\psi)(x_k)=\psi(x_k),$$
so we also have $\psi=\mathrm{id}_{F_n}$. In summary we have 
$$(\psi,y_1,\ldots,y_{s-1})=(\mathrm{id}_{F_n},1\ldots,1),$$
proving that $\alpha_n^s$ is indeed injective.\end{proof}

\noindent Again, the connectivity of $W_n(A_2,X_2)_\bullet$ is determined by that of $S_n(A_2,X_2)$, thanks to the following proposition and Proposition \ref{prop:connectivity-W-vs-S}:

\begin{proposition}
    The category $U\cG(2)$ satisfies local standardness at $(A_2,X_2)$.
\end{proposition}

\begin{proof}
    The proof of Proposition \ref{prop:LS-for-UG(1)} essentially repeats verbatim, replacing $\phi_n^s$ by $\nu_n^s$ in the calculations, so we leave the details as an exercise to the interested reader.
\end{proof}

\noindent Note that since $A_2\oplus X_2^{\oplus n}\cong M_n^s$, its number of $S^1\times S^2$-summands is $n$. We will thus again determine the connectivity of $S_n(A_2,X_2)$ by comparing it to the complex of arcs and coconnected sphere systems:

\begin{proposition}\label{prop:simpcomplexes}
For $n\ge 1$, let $M=A_2\oplus X_2^{\oplus n}$ and $x_0$ and $x_1$ be the north poles of the first and last components of $\partial M$, respectively. Then we have an isomorphism of simplicial complexes $S_n(A_2,X_2)\cong X^{\mathrm{A}}(M,x_0,x_1)$. 
\end{proposition}

\begin{proof}
We can assume that the boundary components of $X_2=S^2\times I$ are parametrized so that if $y$ is the north pole of its first boundary component, then $\{y\}\times I$ is a line between the north poles of the two boundary components. Defining an embedding $f_0:S^2+I\hookrightarrow X_2$ by
\begin{align*}
    f_0(x)=\begin{cases}
        (y,x) &\text{if }y\in I,\\
        (x,1/2)&\text{if }x\in S^2,
    \end{cases}
\end{align*}
the argument of the proof of Proposition \ref{prop:firstcomplexiso} goes through almost word for word, using that a regular neighborhood of an embedding of $S^2+ I$ is diffeomorphic to $X_2$. Let us only remark on the points that differ.

When showing that the map between the simplicial complexes is injective, we obtain the isotopy class of a diffeomorphism of $S^2\times I$, which fixes the boundary pointwise, fixes the point $y\times (1/2)$ on the sphere $S^2\times(1/2)$ and fixes the rest of this sphere set-wise. Once again we can use \cite[Proposition 2.1]{HatcherWahl3manifolds}, but since $S^2\times I$ is simply connected, in this case such a diffeomorphism that fixes the boundary pointwise is necessarily a Dehn twist around a $2$-sphere.

In the final step, we note that $X_2\setminus(S^2\times\{1/2\})$ is disconnected, but since $B\oplus X_2^{\oplus (p+1)}$ is obtained by gluing each boundary component of the copies of $X_2$ to two different boundary components of $B$, which is connected, the corresponding sphere system is still coconnected. 
\end{proof}

\noindent From the connectedness of $S_n(A_2, X_2)$, we get our second stability result:

\begin{theorem}\label{thm:stab2}
For $s\ge 2$ and $(A_2, X_2)$ as above, $G_n=\Aut_{\cG(2)}\left(A_2\oplus  X_2^{\oplus n}\right)\cong \Gamma_n^s$ and $F:\mathcal{C}_{A_2, X_2}\to\Ab$ a coefficient system of degree $r$ at $N\in\bZ$, the map
$$\left(\nu_n^s,F(\sigma_n^{X_2})\right)_*:H_*(\Gamma_n^s,F_n)\to H_*(\Gamma_{n+1}^s,F_{n+1})$$
is an isomorphism for $2{*}\le n-2r-3$ and $n\ge N$.
\end{theorem}

\noindent Again, we have $M_n^s\cong A_2\oplus X_2^{\oplus n}$. We define a sequence $\{F_n^2\}_{n\ge 0}$ of $G_n$-representations by $F_n^2:= H^*_{\bZ}(n,s)$ and $G_n$-equivariant maps $\sigma_n^2:F_n^2\to F_{n+1}^2$ by $\sigma_n^2:=(a_{n+1}^{s-1}\circ m_{n+1}^s)^*$. Once again it is clear that the hypotheses of Proposition \ref{prop:extendcoefficientstofunctor} are satisfied, so we get a coefficient system $F^2:\cC_{A_2,X_2}\to\Ab$.

\begin{proposition}
    The coefficient system $F^2:C_{A_2,X_2}\to\Ab$ is of degree 1 at 0.
\end{proposition}

\begin{proof}
The proof goes through much in the same way as that of Proposition \ref{prop:firstdegree1}, but let us spell out the details for completeness. To prove that $\kappa_{X_2}(F^2)$ is zero, it is once again enough to show that 
$$\sigma_n^{X_2}:H^*_\bZ(n,s)\to H^*_\bZ(n+1,s)$$
is injective for each $n\ge 0$. This map is the dual of the map 
$$(a_{n+1}^{s-1}\circ m_{n+1}^s):H_\bZ(n+1,s)\cong\bZ^{n+1}\oplus\bZ^{s-1}\to \bZ^{n}\oplus\bZ^{s-1}\cong H_\bZ(n,s)$$ given by
$$(x_1,\ldots,x_{n+1},y_1,\ldots,y_{s-1})\mapsto (x_1,\ldots,x_n,y_1,\ldots,y_{s-2},x_{n+1}),$$
which is a surjection, so $\sigma_n^{X_2}$ is indeed injective. Furthermore, $\delta_{X_2}(F^2)(A_2\oplus X_2^{\oplus n})\cong \bZ$, for all $n\ge 0$, and $\delta_{X_2}(F^2)(\sigma_n^{X_2})=\id_\bZ$, so $\delta_{X_2}(F^2)$ has degree 0.\end{proof}

\noindent Once again it is clear that both $F^2$ and $\delta_{X_2}(F^2)$ take values in finitely generated free abelian groups and that $\delta_{X_2}^r(F^2)=0$ for $r\ge 2$, so combining this with Corollary \ref{cor:tensorpowercoefficients} and Theorem \ref{thm:stab1}, we get:

\begin{corollary}\label{cor:secondstab}
    For each $r\ge 0$, there is some $N(r)\in\bZ$ such that for $s\ge 2$, the map
    $$(\nu_n^s,(a_{n+1}^{s-1}\circ m_{n+1}^s)^*)_*:H_*(A_n^s;H_\bZ^*(n,s)^{\otimes r})\to H_*(A_{n+1}^s, H_\bZ^*(n+1,s)^{\otimes r})$$
    is an isomorphism in the range $2{*}\le n-2r-3$ whenever $n\ge N(r)$.
\end{corollary}

\noindent Combining Corollaries \ref{cor:firststab} and \ref{cor:secondstab}, we get a dual version of Theorem \ref{thm:twostabilizations}. The cohomological stability theorem is a direct consequence of the following universal coefficient theorem for group cohomology, a proof of which appears in \cite[Lemma 3.5]{RW18}:

\begin{lemma}
    Let $G$ be a group and $M$ a left $\bZ[G]$-module. Writing $M^*:=\Hom_{\bZ}(M,\bZ)$ for the dual (right) $\bZ[G]$-module. There is a natural short exact sequence
    $$0\to \mathrm{Ext}_R^1(H_{i-1}(G,M),\bZ)\to H^i(G,M^*)\to \Hom_{\bZ}(H_i(G,M),\bZ)\to 0.$$
\end{lemma}

\begin{remark}
    Combining Corollaries  \ref{cor:firststab} and \ref{cor:secondstab} also shows that $\alpha_n^s$ induces an isomorphism in homology with the coefficients $H_{\bZ}^{*}(n,s)^{\otimes r}$, in a stable range. Furthermore, we can note that if $\pi_n^s:A_n^{s+1}\to A_n^{s}$ is the projection, we have $\pi_n^s\circ\mu_n^s= \id_{A_n^s}$, so $\pi_n^s$ also induces an isomorphism in homology with these coefficients, in a stable range.
\end{remark}

\vspace{-0.2em}
\printbibliography
\vspace{-0.8em}
\Addresses

\end{document}